\documentclass[12pt,reqno]{article}

\usepackage[usenames]{color}
\usepackage{amssymb}
\usepackage{graphicx}
\usepackage{amscd}

\usepackage[colorlinks=true,
linkcolor=webgreen,
filecolor=webbrown,
citecolor=webgreen]{hyperref}

\definecolor{webgreen}{rgb}{0,.5,0}
\definecolor{webbrown}{rgb}{.6,0,0}

\usepackage{color}
\usepackage{fullpage}
\usepackage{float}

\usepackage{graphics,amsmath,amssymb}
\usepackage{amsthm}
\usepackage{amsfonts}
\usepackage{latexsym}
\usepackage{epsf}

\setlength{\textwidth}{6.5in}
\setlength{\oddsidemargin}{.1in}
\setlength{\evensidemargin}{.1in}
\setlength{\topmargin}{-.1in}
\setlength{\textheight}{8.4in}

\newcommand{\seqnum}[1]{\href{http://oeis.org/#1}{\underline{#1}}}

\usepackage{relsize,mathtools,cancel} 
\usepackage{qsymbols}
\usepackage{url} 

\usepackage[font=small,labelfont=bf,textfont=rm,format=hang]{caption,subcaption} 
\captionsetup[subtable]{labelformat=simple} 
\captionsetup[subtable]{labelsep=period,width=0.95\textwidth,format=hang} 
\captionsetup[table]{font=small,labelfont=bf,textfont=rm,width=0.95\textwidth,format=hang}

\usepackage{rotating}

\newcommand{\tagonce}[0]{
     \addtocounter{equation}{1}
     \tag{\theequation}
} 
\newcommand{\tagtext}[1]{\tag{\footnotesize\underline{\emph{#1}}}}

\renewcommand{\labelenumi}{$\mathsmaller{\blacktriangleright}$ } 
\newcommand{\sublabel}[1]{\smallskip\noindent{\smaller{\underline{\textbf{#1}}.\ }} \bigskip \\ \noindent}

\newcommand{\quotetext}[1]{``#1''} 
\newcommand{\cf}[0]{cf.\ } 
\newcommand{\ie}[0]{i.e.,\ } 

\newcommand{\OEISII}[1]{{\texttt{#1}}} 
\newcommand{\seqmapsto}[2][]{
     \xrightarrow[\text{ \OEISII{#1} }]{\text{ \OEISII{#2} }}\quad
}  

\newcommand{\defequals}{\ensuremath{\vcentcolon=}} 
\newcommand{\defmapsto}{\ensuremath{\vcentcolon\mapsto}} 
\newcommand{\undersetbrace}[2]{\ensuremath{\underset{\mathlarger{#1}}{\mathsmaller{\underbrace{#2}}}}} 
\newcommand{\undersetline}[2]{\ensuremath{\underset{\mathlarger{#1}}{\mathsmaller{\underline{#2}}}}}

\newcommand{\TheSummaryNBFile}{ 
     \href{\TheSummaryNBFileGoogleDriveLink}{\texttt{multifact-cfracs-summary.nb}}
}
\newcommand{\Mm}[0]{\emph{Mathematica}} 
\newcommand{\SigmaPkg}[0]{%
     \href{http://www.risc.jku.at/research/combinat/software/Sigma/index.php}{%
     \texttt{Sigma}}
}

\newcommand{\gkpSI}[2]{\ensuremath{\genfrac{\lbrack}{\rbrack}{0pt}{}{#1}{#2}}} 
\newcommand{\gkpSII}[2]{\ensuremath{\genfrac{\{}{\}}{0pt}{}{#1}{#2}}}

\newcommand{\FcfII}[3]{\ensuremath{\gkpSI{#2}{#3}_{#1}}} 
 
\newcommand{\FFactII}[2]{\ensuremath{#1^{\underline{#2}}}} 
 
\newcommand{\RFactII}[2]{\ensuremath{#1^{\overline{#2}}}} 
\newcommand{\Pochhammer}[2]{\ensuremath{\left(#1\right)_{#2}}} 
\newcommand{\Iverson}[1]{\ensuremath{\left[#1\right]_{\delta}}} 
\newcommand{\MultiFactorial}[2]{\ensuremath{#1!_{\left(#2\right)}}} 
\newcommand{\AlphaFactorial}[2]{\ensuremath{\left(#1\right)!_{\left(#2\right)}}} 
\newcommand{\pn}[3]{\ensuremath{p_{#1}\left(#2, #3\right)}}

\newcommand{\ConvGF}[4]{\ensuremath{\Conv_{#1}\left(#2, #3; #4\right)}} 
\newcommand{\ConvFP}[4]{\ensuremath{\FP_{#1}\left(#2, #3; #4\right)}}

\def\?{\hbox{!`}} 

\newcommand{\StartGroupingSubEquations}{\begin{subequations}} 
\newcommand{\EndGroupingSubEquations}{\end{subequations}} 

\DeclareMathOperator{\FP}{FP} 
\DeclareMathOperator{\FQ}{FQ} 
\DeclareMathOperator{\Conv}{Conv}

\DeclareMathOperator{\Log}{Log} 
\DeclareMathOperator{\RHS}{RHS} 

\DeclareMathOperator{\WT}{WT} 
\DeclareMathOperator{\CT}{CT} 
\DeclareMathOperator{\SPT}{SPT} 
\newcommand{\Wilson}{\mathsmaller{\WT}} 
\newcommand{\Clement}{\mathsmaller{\CT}} 
\newcommand{\SPTriple}{\mathsmaller{\SPT}} 

\DeclareMathOperator{\XT}{T} 
\DeclareMathOperator{\XK}{K} 
\DeclareMathOperator{\XP}{P} 
\newcommand{\xt}{x_{\mathsmaller{\XT}}} 
\newcommand{\xk}{x_{\mathsmaller{\XK}}} 
\newcommand{\xp}{x_{\mathsmaller{\XP}}} 

\DeclareMathOperator{\quot}{quot} 
\newcommand{\WilsonQuotient}[1]{W_{\mathsmaller{\quot}}\left(#1\right)} 

\DeclareMathOperator{\WilsonPrime}{Wilson} 
\DeclareMathOperator{\WolstPrime}{Wolstenholme} 
\DeclareMathOperator{\WieferichPrime}{Wieferich} 
\newcommand{\WilsonPrimeSet}{\mathbb{P}_{\mathsmaller{\WilsonPrime}}} 
\newcommand{\WolstPrimeSet}{\mathbb{P}_{\mathsmaller{\WolstPrime}}} 
\newcommand{\WieferichPrimeSet}{\mathbb{P}_{\mathsmaller{\WieferichPrime}}} 

\allowdisplaybreaks

\begin{document}

\theoremstyle{plain}
\newtheorem{theorem}{Theorem}
\newtheorem{lemma}[theorem]{Lemma}
\newtheorem{prop}[theorem]{Proposition}
\newtheorem{cor}[theorem]{Corollary}

\theoremstyle{definition}
\newtheorem{definition}[theorem]{Definition}
\newtheorem{example}[theorem]{Example}

\theoremstyle{remark}
\newtheorem{remark}[theorem]{Remark}

\begin{center}
\vskip 1cm{\LARGE\bf 
New Congruences and Finite Difference Equations for 
Generalized Factorial Functions} 
\vskip 1cm
\large
Maxie D. Schmidt \\ 
University of Washington \\ 
Department of Mathematics \\ 
Padelford Hall \\ 
Seattle, WA 98195 \\ 
USA \\ 
\href{mailto:maxieds@gmail.com}{\nolinkurl{maxieds@gmail.com}} 
\end{center}

\vskip .2 in
\begin{abstract} 
We use the rationality of the generalized $h^{th}$ convergent functions, 
$\Conv_h(\alpha, R; z)$, to the infinite J-fraction expansions 
enumerating the generalized factorial product sequences, 
$p_n(\alpha, R) = R(R+\alpha)\cdots(R+(n-1)\alpha)$, 
defined in the references to construct new congruences and $h$-order 
finite difference equations for generalized factorial functions modulo 
$h \alpha^t$ for any primes or odd integers $h \geq 2$ and 
integers $0 \leq t \leq h$. 
Special cases of the results we consider within the article include 
applications to new congruences and exact formulas for the 
$\alpha$-factorial functions, $n!_{(\alpha)}$. 
Applications of the new results we consider within the article include 
new finite sums for the $\alpha$-factorial functions, restatements of 
classical necessary and sufficient conditions of the primality of 
special integer subsequences and tuples, and new finite sums
for the single and double factorial functions modulo integers $h \geq 2$. 
\end{abstract} 

\vskip 0.2in 

\section{Notation and other conventions in the article} 
\label{Section_Notation_and_Convs} 

\subsection{Notation and special sequences} 

Most of the conventions in the article are consistent with the 
notation employed within the \emph{Concrete Mathematics} reference, and 
the conventions defined in the introduction to the first articles 
\cite{MULTIFACT-CFRACS,MULTIFACTJIS}. 
These conventions 
include the following particular notational variants: 
\begin{enumerate} 
     \renewcommand{\labelenumi}{$\mathsmaller{\blacktriangleright}$ } 
     \newcommand{\localitemlabel}[1]{\textbf{#1}.\ } 

\item \localitemlabel{Extraction of formal power series coefficients} 
The special notation for formal 
power series coefficient extraction, 
$[z^n] \left( \sum_{k} f_k z^k \right) \defmapsto f_n$; 

\item \localitemlabel{Iverson's convention} 
The more compact usage of Iverson's convention, 
$\Iverson{i = j} \equiv \delta_{i,j}$, in place of 
Kronecker's delta function where 
$\Iverson{n = k = 0} \equiv \delta_{n,0} \delta_{k,0}$; 

\item \localitemlabel{Bracket notation for the Stirling number triangles} 
We use the alternate bracket notation for the Stirling number triangles, 
$\gkpSI{n}{k} = (-1)^{n-k} s(n, k)$ and 
$\gkpSII{n}{k} = S(n, k)$; 

\item \localitemlabel{Harmonic number sequences} 
Use of the notation for the first-order harmonic numbers, $H_n$ or 
$H_n^{(1)}$, which defines the sequence 
\[
H_n \defequals 1+\frac{1}{2}+\frac{1}{3}+\cdots+\frac{1}{n}, 
\] 
and the notation for the partial sums for the more general cases of the 
$r$-order harmonic numbers, $H_n^{(r)}$, defined as 
\[ 
\mathsmaller{H_n^{(r)}} \defequals 1 + 2^{-r} + 3^{-r} + \cdots + n^{-r}, 
\]
when $r, n \geq 1$ are integer-valued and where $H_n^{(r)} \equiv 0$ for all 
$n \leq 0$; 

\item \localitemlabel{Rising and falling factorial functions} 
We use the convention of denoting the 
falling factorial function by $\FFactII{x}{n} = x! / (x-n)!$, the 
rising factorial function as $\RFactII{x}{n} = \Gamma(x+n) / \Gamma(x)$, 
or equivalently by the Pochhammer symbol, 
$\Pochhammer{x}{n} = x(x+1)(x+2) \cdots (x+n-1)$; 

\item \localitemlabel{Shorthand notation in integer congruences and modular arithmetic} 
Within the article the notation 
$g_1(n) \equiv g_2(n) \pmod{N_1, N_2, \ldots, N_k}$ is understood to 
mean that the congruence, $g_1(n) \equiv g_2(n) \pmod{N_j}$, holds 
modulo any of the bases, $N_j$, for $1 \leq j \leq k$. 

\end{enumerate} 
The standard set notation for 
$\mathbb{Z}$, $\mathbb{Q}$, and $\mathbb{R}$ 
denote the sets of integers, rational numbers, and real numbers, respectively, 
where the set of natural numbers, $\mathbb{N}$, is defined by 
$\mathbb{N} \defequals \{0, 1, 2, \ldots \} = \mathbb{Z}^{+} \bigcup \{0\}$. 
Other more standard notation for the special functions 
cited within the article is consistent with the definitions 
employed in the \emph{NIST Handbook of Mathematical Functions} (2010). 

\subsection{Mathematica summary notebook document and 
            computational reference information} 
\label{subSection_MmSummaryNBInfo} 

The article is prepared with a more extensive set of 
computational data and software routines 
released as open source software to accompany the examples and 
numerous other applications suggested as topics for future 
research and investigation within the article. 
It is highly encouraged, and expected, that the 
interested reader obtain a copy of the summary notebook reference and 
computational documentation prepared in this format to 
assist with computations in a multitude of special case examples cited as 
particular applications of the new results. 

The prepared summary notebook file, \TheSummaryNBFile, 
attached to the submission of this manuscript 
contains the working \Mm{} code to verify the formulas, 
propositions, and other identities cited within the article 
\cite{SUMMARYNBREF-STUB}. 
Given the length of this and the first article, 
the \Mm{} summary notebook included with this submission 
is intended to help the reader with verifying and 
modifying the examples presented as 
applications of the new results cited below. 
The summary notebook also contains numerical data 
corresponding to computations of multiple examples and 
congruences specifically referenced in several places by the 
applications given in the next sections of the article. 

\section{Introduction} 
\label{Section_Intro} 

\subsection{Motivation} 

In this article, we extend the results from the reference 
\cite{MULTIFACT-CFRACS} providing infinite J-fraction expansions for the 
typically divergent ordinary generating functions (OGFs) of generalized 
factorial product sequences of the form 
\begin{align} 
\label{eqn_GenFact_product_form} 
\pn{n}{\alpha}{R} & \defequals 
     R (R + \alpha) (R + 2\alpha) \times \cdots \times (R + (n-1)\alpha) 
     \Iverson{n \geq 1} + \Iverson{n = 0}, 
\end{align} 
when $R$ depends linearly on $n$. Notable special cases of 
\eqref{eqn_GenFact_product_form} that we are particularly interested in 
enumerating through the convergents to these J-fraction expansions include the 
multiple, or \emph{$\alpha$-factorial functions}, $\MultiFactorial{n}{\alpha}$, 
defined for $\alpha \in \mathbb{Z}^{+}$ as 
\begin{equation} 
\label{eqn_nAlpha_Multifact_variant_rdef} 
\MultiFactorial{n}{\alpha} = 
     \begin{cases} 
     n \cdot (n-\alpha)!_{(\alpha)}, & \text{ if $n > 0$; } \\ 
     1, & \text{ if $-\alpha < n \leq 0$; } \\ 
     0, & \text{ otherwise, } 
     \end{cases} 
\end{equation} 
and the generalized factorial functions of the form 
$p_n(\alpha, \beta n+\gamma)$ for $\alpha, \beta, \gamma \in \mathbb{Z}$, 
$\alpha \neq 0$, and $\beta, \gamma$ not both zero. 
The second class of special case products are related to the 
\emph{Gould polynomials}, 
$G_n(x; a, b) = \frac{x}{x-an} \cdot \FFactII{\left(\frac{x-an}{b}\right)}{n}$, 
through the following identity 
(\cite[\S 3.4.2]{MULTIFACTJIS},\cite[\S 4.1.4]{UC}): 
\begin{align} 
\label{eqn_pnAlphaBetanpGamma_GouldPolyExp_Ident-stmt_v1} 
\pn{n}{\alpha}{\beta n + \gamma} & = 
     \frac{(-\alpha)^{n+1}}{\gamma-\alpha-\beta} \times 
     G_{n+1}\left(\gamma-\alpha-\beta; -\beta, -\alpha\right). 
\end{align} 
The $\alpha$-factorial functions, $\AlphaFactorial{\alpha n-d}{\alpha}$ for 
$\alpha \in \mathbb{Z}^{+}$ and some $0 \leq d < \alpha$, form special cases of 
\eqref{eqn_pnAlphaBetanpGamma_GouldPolyExp_Ident-stmt_v1} where, equivalently, 
$(\alpha, \beta, \gamma) \equiv (-\alpha, \alpha, -d)$ and 
$(\alpha, \beta, \gamma) \equiv (\alpha, 0, \alpha-d)$ 
\cite[\S 6]{MULTIFACT-CFRACS}. 
The $\alpha$-factorial functions are expanded by the triangles of 
\emph{Stirling numbers of the first kind}, $\gkpSI{n}{k}$, and the 
\emph{$\alpha$-factorial coefficients}, $\FcfII{\alpha}{n}{k}$, respectively, 
in the following forms \cite{GKP,MULTIFACTJIS}: 
\begin{align*} 
\tagonce\label{eqn_AlphaNm1pd_AlphaFactFn_PolyCoeffSum_Exp_formula-eqns_v3} 
\MultiFactorial{n}{\alpha} & = 
     \sum_{m=0}^{n} 
     \gkpSI{\lceil n / \alpha \rceil}{m} 
     (-\alpha)^{\lceil \frac{n}{\alpha} \rceil - m} n^{m},\ 
     \forall n \geq 1, \alpha \in \mathbb{Z}^{+} \\ 
     & = 
     \sum_{m=0}^{n} 
     \FcfII{\alpha}{\lfloor \frac{n-1+\alpha}{\alpha} \rfloor + 1}{m+1} 
     (-1)^{\lfloor \frac{n-1+\alpha}{\alpha} \rfloor - m} 
     (n+1)^{m},\ 
     \forall n \geq 1, \alpha \in \mathbb{Z}^{+} \\ 
(\alpha n-d)!_{(\alpha)} 
     & = 
     (\alpha - d) \times 
     \sum_{m=1}^{n} \FcfII{\alpha}{n}{m} (-1)^{n-m} 
     (\alpha n + 1 - d)^{m-1} \\ 
     & = 
     \phantom{(\alpha - d) \times} 
     \sum_{m=0}^{n} \FcfII{\alpha}{n+1}{m+1} (-1)^{n-m} 
     (\alpha n + 1 - d)^{m},\ 
     \forall n \geq 1, \alpha \in \mathbb{Z}^{+}, 0 \leq d < \alpha. 
\end{align*} 
A careful treatment of the polynomial expansions of these generalized 
$\alpha$-factorial functions through the coefficient triangles in 
\eqref{eqn_AlphaNm1pd_AlphaFactFn_PolyCoeffSum_Exp_formula-eqns_v3} 
is given in the reference \cite{MULTIFACTJIS}. 

\subsection{Summary of the J-fraction results} 

For all $h \geq 2$, we can generate the generalized factorial product 
sequences, $p_n(\alpha, R)$, through the strictly rational 
generating functions provided by the $h^{th}$ convergent functions, 
denoted by $\ConvGF{h}{\alpha}{R}{z}$, to the infinite 
continued fraction series established by the reference \cite{MULTIFACT-CFRACS}. 
In particular, we have series expansions of these convergent functions 
given by 
\begin{align} 
\notag 
\ConvGF{h}{\alpha}{R}{z} & := 
     \cfrac{1}{1 - R \cdot z - 
     \cfrac{\alpha R \cdot z^2}{ 
            1 - (R+2\alpha) \cdot z -
     \cfrac{2\alpha (R + \alpha) \cdot z^2}{ 
            1 - (R + 4\alpha) \cdot z - 
     \cfrac{3\alpha (R + 2\alpha) \cdot z^2}{ 
     \cfrac{\cdots}{1 - (R + 2 (h-1) \alpha) \cdot z}}}}} \\ 
\label{eqn_ConvGF_notation_def} 
     & \phantom{:} = 
     \frac{\FP_h(\alpha, R; z)}{\FQ_h(\alpha, R; z)} \\ 
\notag 
     & \phantom{:} = 
     \sum_{n=0}^{h} p_n(\alpha, R) z^n + 
     \sum_{n>h}^{\infty} \left[p_n(\alpha, R) \pmod{h}\right] z^n,  
\end{align} 
where the convergent function numerator and denominator polynomial 
subsequences providing the characteristic expansions of 
\eqref{eqn_ConvGF_notation_def} are given in closed-form by 
\begin{align} 
\label{eqn_PFact_Qhz_product_ident} 
\FQ_h(\alpha, R; z) & = 
     \sum_{k=0}^{h} \binom{h}{k} (-1)^{k} \left(\prod_{j=0}^{k-1} 
     (R + (h-1-j)\alpha)\right) z^k \\ 
\notag 
     & = 
     \sum_{k=0}^{h} \binom{h}{k} 
     \left(\frac{R}{\alpha} + h-k\right)_{k} (-\alpha z)^{k} \\ 
\notag 
     & = 
     (-\alpha z)^{h} \cdot h! \times 
     L_h^{(R / \alpha - 1)}\left((\alpha z)^{-1}\right), 
\end{align} 
when $L_n^{(\beta)}(x)$ denotes an \emph{associated Laguerre polynomial}, 
and where 
\begin{subequations} 
\label{eqn_Vandermonde-like_PHSymb_exps_of_PhzCfs} 
\begin{align} 
\FP_h(\alpha, R; z) & = \sum_{n=0}^{h-1} C_{h,n}(\alpha, R) z^n \\ 
     & = 
\label{eqn_Chn_formula_stmt_v1} 
     \sum_{n=0}^{h-1} \left(\sum_{i=0}^{n} \binom{h}{i} (-1)^{i} 
     \pn{i}{-\alpha}{R + (h-1) \alpha} 
     \pn{n-i}{\alpha}{R}\right) z^n \\ 
\label{eqn_Vandermonde-like_PHSymb_exps_of_PhzCfs-stmt_v1} 
     & = 
     \sum_{n=0}^{h-1} \left(\sum_{i=0}^{n} \binom{h}{i} 
     \Pochhammer{1-h-R / \alpha}{i} 
     \Pochhammer{R / \alpha}{n-i}\right) (\alpha z)^{n}. 
\end{align} 
\end{subequations} 
The coefficients of the polynomial powers of $z$ in the previous 
several expansions, $C_{h,n}(\alpha, R) := [z^n] \FP_h(\alpha, R; z)$ for 
$0 \leq n < h$, also have the following multiple, alternating sum 
expansions involving the Stirling number triangles 
\cite[\S 5.2]{MULTIFACT-CFRACS}: 
\begin{subequations} 
\label{eqn_Chn_formula_stmts} 
\begin{align} 
\label{eqn_Chn_formula_stmts-exp_v1}
C_{h,n}(\alpha, R) & = 
     \sum\limits_{\substack{0 \leq m \leq k \leq n \\ 
                            0 \leq s \leq n} 
                 } 
     \left( 
     \binom{h}{k} \binom{m}{s} \gkpSI{k}{m} (-1)^{m} \alpha^{n} 
     \Pochhammer{\frac{R}{\alpha}}{n-k} 
     \left(\frac{R}{\alpha} - 1\right)^{m-s} 
     \right) \times h^{s} \\ 
\label{eqn_Chn_formula_stmts-exp_v2}
  & = 
     \sum\limits_{\substack{0 \leq m \leq k \leq n \\ 
                            0 \leq t \leq s \leq n} 
                 } 
     \left( 
     \binom{h}{k} \binom{m}{t} \gkpSI{k}{m} \gkpSI{n-k}{s-t} 
     (-1)^{m} \alpha^{n-s} (h-1)^{m-t} 
     \right) \times R^{s} \\ 
\label{eqn_Chn_formula_stmts-exp_v3}
   & = 
     \sum\limits_{\substack{0 \leq m \leq k \leq n \\ 
                            0 \leq i \leq s \leq n} 
                 } 
     \binom{h}{k} \binom{h}{i} \binom{m}{s} \gkpSI{k}{m} \gkpSII{s}{i} 
     (-1)^{m} \alpha^{n} 
     \Pochhammer{\frac{R}{\alpha}}{n-k} 
     \left(\frac{R}{\alpha} - 1\right)^{m-s} \times i! \\ 
\label{eqn_Chn_formula_stmts-exp_v4}
   & = 
     \sum\limits_{\substack{0 \leq m \leq k \leq n \\ 
                            0 \leq v \leq i \leq s \leq n} 
                 } 
     \binom{h}{k} \binom{m}{s} \binom{i}{v} \binom{h+v}{v} 
     \gkpSI{k}{m} \gkpSII{s}{i} (-1)^{m+i-v} \alpha^{n} \times \\ 
\notag 
     & \phantom{= \sum \binom{h}{k} \quad } \times 
     \Pochhammer{\frac{R}{\alpha}}{n-k} 
     \left(\frac{R}{\alpha} - 1\right)^{m-s} \times i! \\ 
\label{eqn_Chn_formula_stmts-exp_v5}
     & = 
     \sum_{i=0}^{n} 
     \underset{\mathlarger{\text{polynomial function of $h$ only }}}{ 
               \underbrace{ 
     \left( 
     \sum\limits_{\substack{0 \leq m \leq k \leq n \\ 
                            0 \leq t \leq s \leq n} 
                 } 
     \binom{h}{k} \binom{m}{t} \gkpSI{k}{m} \gkpSI{n-k}{s-t} \gkpSII{s}{i} 
     (-1)^{m+s-i} (h-1)^{m-t} 
     \right) 
     } 
     } 
     \times \alpha^{n} \Pochhammer{\frac{R}{\alpha}}{i}. 
\end{align} 
\end{subequations} 
Given that the non-zero $h^{th}$ convergent functions, 
$\ConvGF{h}{\alpha}{R}{z}$, are rational in each of $z, \alpha, R$ for all 
$h \geq 1$, and that the convergent denominator sequences, 
$\FQ_h(\alpha, R; z)$, have characteristic expansions by the 
Laguerre polynomials and the \emph{confluent hypergeometric functions}, 
we may expand both exact finite sums and congruences modulo $h \alpha^t$ 
for the generalized factorial functions, $p_n(\alpha, \beta n + \gamma)$, 
by the distinct special zeros of these functions in the next forms 
when the $h^{th}$ convergent functions are expanded in partial fractions as 
$\ConvGF{h}{\alpha}{R}{z} \equiv \sum_{1 \leq j \leq h} 
 c_{h,j}(\alpha, R) / (1-\ell_{h,j}(\alpha, R) \cdot z)$ 
\cite[\S 6.2]{MULTIFACT-CFRACS}. 
\begin{align} 
\label{eqn_AlphaFactFn_Exact_PartialFracsRep_v1} 
p_n(\alpha, R) & = 
     \sum_{j=1}^{n} c_{n,j}(\alpha, R) \times 
     \ell_{n,j}(\alpha, R)^{n} \\ 
\notag 
p_n(\alpha, R) & \equiv 
     \sum_{j=1}^{h} c_{h,j}(\alpha, R) \times 
     \ell_{h,j}(\alpha, R)^{n} 
     && \pmod{h} \\ 
\notag 
n!_{(\alpha)} & = 
     \sum_{j=1}^{n} c_{n,j}(-\alpha, n) \times 
     \ell_{n,j}(-\alpha, n)^{\lfloor \frac{n-1}{\alpha} \rfloor} \\ 
\notag 
n!_{(\alpha)} & \equiv 
     \sum_{j=1}^{h} c_{h,j}(-\alpha, n) \times 
     \ell_{h,j}(-\alpha, n)^{\lfloor \frac{n-1}{\alpha} \rfloor} 
     && \pmod{h, h\alpha, \cdots, h\alpha^{h}} 
\end{align} 

\subsection{Key new results proved in the article} 

Whereas in the first reference \cite{MULTIFACT-CFRACS} we prove new 
exact formulas and congruence properties using the algebraic properties of the 
rational convergents, $\ConvGF{h}{\alpha}{R}{z}$, in this article we 
choose an alternate route to derive our new results. 
Namely, we use the rationality of the $h^{th}$ convergent functions to 
establish new $h$-order finite difference equations for the coefficients of 
$\ConvGF{h}{\alpha}{\beta n + \gamma}{z}$, both in the exact forms of 
$p_n(\alpha, \beta n + \gamma)$ with respect to $n$, 
as well as for these special factorial functions 
expansions modulo $h \alpha^t$ for any integers $h \geq 2$ and 
$0 \leq t \leq h$. 
We state the next key proposition, which we subsequently prove in 
Section \ref{Section_KeyProp_Proof}, before giving several examples of the 
applications we consider within the article. 

\begin{prop}[Finite Difference Equations for Generalized Factorial Functions]
\label{prop_KeyProp} 
\label{prop_ExactFormulas_CongruencesModh_from_FiniteDiffEqns} 
For fixed $\alpha, \beta, \gamma \in \mathbb{Z}$ with $\alpha \neq 0$ and 
$\beta, \gamma$ not both zero, $h$ odd or prime, 
an integer $0 \leq t \leq h$, and 
any integers $n, r \geq 0$, we have the next exact expansions and 
congruences for the generalized product sequences, 
$p_n(\alpha, R)$ and $p_n(\alpha, \beta n + \gamma)$, where the 
numerator coefficients, $C_{h,n}(\alpha, R)$, are given by 
$C_{h,k}(\alpha, R) \defequals [z^{k}] \ConvFP{h}{\alpha}{R}{z}$. 
\begin{subequations} 
\label{eqn_pnAlphaR_seqs_finite_sum_reps_modulop} 
\begin{align} 
\label{eqn_pnAlphaR_seqs_finite_sum_reps_modulop-stmts_v0} 
p_{n}(\alpha, R) 
     & = 
     \sum_{k=0}^{n-1} 
     \mathsmaller{ 
     \binom{n+r}{k+1} (-1)^{k} 
     p_{k+1}(-\alpha, R + (n-1+r) \alpha) 
     p_{n-1-k}(\alpha, R) 
     } + 
     C_{n+r,n}(\alpha, R) \\ 
\notag 
     & = 
     \sum_{k=0}^{n-1} 
     \mathsmaller{ 
     \binom{n}{k+1} (-1)^{k} 
     p_{k+1}(-\alpha, R + (n-1) \alpha) p_{n-1-k}(\alpha, R) + 
     \Iverson{n = 0} 
     } \\ 
\label{eqn_pnAlphaR_seqs_finite_sum_reps_modulop-stmts_v1} 
p_{n}(\alpha, \beta n + \gamma) & \equiv 
     \sum\limits_{k=0}^{n} \binom{h}{k} (-\alpha)^{k} 
     \pn{k}{-\alpha}{\beta n + \gamma + (h-1) \alpha} 
     \pn{n-k}{\alpha}{\beta n + \gamma} \pmod{h} \\ 
\notag 
     & = 
     \sum\limits_{k=0}^{n} \binom{h}{k} \alpha^{n+k} 
     \Pochhammer{1-h-\mathsmaller{\frac{\beta n + \gamma}{\alpha}}}{k} 
     \Pochhammer{\mathsmaller{\frac{\beta n + \gamma}{\alpha}}}{n-k} 
     \\ 
\label{eqn_pnAlphaR_seqs_finite_sum_reps_modulop-stmts_v2} 
\pn{n}{\alpha}{\beta n + \gamma} & \equiv 
     \sum\limits_{k=0}^{n} \binom{h}{k} \alpha^{n+(t+1)k} 
     \Pochhammer{1-h-\mathsmaller{\frac{\beta n + \gamma}{\alpha}}}{k} 
     \Pochhammer{\mathsmaller{\frac{\beta n + \gamma}{\alpha}}}{n-k} 
     \pmod{h \alpha^{t}} 
\end{align} 
\end{subequations} 
\end{prop}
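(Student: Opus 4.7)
The plan is to extract the stated finite-difference identities from the rationality of the $h^{th}$ convergent function, $\ConvGF{h}{\alpha}{R}{z} = \FP_h(\alpha, R; z) / \FQ_h(\alpha, R; z)$. Clearing denominators gives $\FP_h(\alpha, R; z) = \ConvGF{h}{\alpha}{R}{z} \cdot \FQ_h(\alpha, R; z)$, and extracting the coefficient $[z^n]$ converts this polynomial identity into a finite arithmetic convolution that relates the exact values $p_k(\alpha, R)$, the closed-form coefficients $[z^j]\FQ_h(\alpha, R; z) = \binom{h}{j}(-1)^j p_j(-\alpha, R+(h-1)\alpha)$ coming from \eqref{eqn_PFact_Qhz_product_ident}, and the numerator coefficients $C_{h,n}(\alpha, R)$ given by \eqref{eqn_Chn_formula_stmt_v1}. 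This single convolution is the algebraic engine driving all three parts of the proposition.

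For the exact formula in \eqref{eqn_pnAlphaR_seqs_finite_sum_reps_modulop-stmts_v0}, I would apply the convolution with the parameter taken to be $h := n + r$, since then the first $n+r+1$ series coefficients of $\ConvGF{n+r}{\alpha}{R}{z}$ coincide exactly with $p_k(\alpha, R)$ rather than merely modulo $n+r$. Isolating the $k = n$ term (whose $\FQ$-coefficient is $1$), solving for $p_n(\alpha, R)$, and reindexing via $j = n - 1 - k$ then produces the first line of the claim; the second line is the specialization $r = 0$, using that $C_{n,n}(\alpha, R) = 0$ because $\deg_z \FP_n \leq n-1$, with the Iverson bracket $[n = 0]$ accounting for the empty-sum case at $n = 0$.

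For the congruences in \eqref{eqn_pnAlphaR_seqs_finite_sum_reps_modulop-stmts_v1} and \eqref{eqn_pnAlphaR_seqs_finite_sum_reps_modulop-stmts_v2}, I would first rewrite their right-hand sides via the elementary identities $\alpha^{k}(1-h-R/\alpha)_{k} = (-1)^k p_k(-\alpha, R+(h-1)\alpha)$ and $\alpha^{n-k}(R/\alpha)_{n-k} = p_{n-k}(\alpha, R)$ into the unified form
\[
\sum_{k=0}^{n} \binom{h}{k}(-1)^k \alpha^{(t+1)k}\, p_k(-\alpha, R+(h-1)\alpha)\, p_{n-k}(\alpha, R).
\]
The $k = 0$ summand equals $p_n(\alpha, R)$ on the nose, so the task reduces to showing that every $k \geq 1$ summand is divisible by $h\alpha^t$. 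The factor $\alpha^{(t+1)k}$ delivers $\alpha^t$-divisibility automatically; for $h$-divisibility, when $h$ is prime one invokes $h \mid \binom{h}{k}$ at interior $1 \leq k \leq h-1$, and the terminal $k = h$ contribution (present only when $n \geq h$) reduces, via the telescoping identity $p_h(-\alpha, R+(h-1)\alpha) = p_h(\alpha, R)$, to $\alpha^h p_h(\alpha, R) p_{n-h}(\alpha, R)$, which is $h$-divisible either by the Fermat-style observation that $\{R + j\alpha\}_{j=0}^{h-1}$ exhausts the residues modulo $h$ when $\gcd(\alpha, h) = 1$, or trivially from $h \mid \alpha^h$ when $h \mid \alpha$. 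Setting $t = 0$ specializes \eqref{eqn_pnAlphaR_seqs_finite_sum_reps_modulop-stmts_v2} to \eqref{eqn_pnAlphaR_seqs_finite_sum_reps_modulop-stmts_v1}.

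The hardest part will be handling $h$ odd and composite, where $\binom{h}{k}$ need not be divisible by $h$ at intermediate indices $k$; here the affine hypothesis $R = \beta n + \gamma$ and the $\alpha^{(t+1)k}$ amplification must cooperate, presumably by extracting sub-progressions of length $h$ out of the combined product $p_k(-\alpha, R+(h-1)\alpha) p_{n-k}(\alpha, R)$ whose variation with $n$ supplies the missing $h$-periodicity. Verifying this residual divisibility, case by case on $\gcd(\alpha, h)$ and on whether $n \geq h$, is the technical core of the argument, and is the only place where the precise hypotheses on $h$, on $\alpha$, and on the linearity of $R$ in $n$ are genuinely marshaled together.
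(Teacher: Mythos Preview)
Your approach to \eqref{eqn_pnAlphaR_seqs_finite_sum_reps_modulop-stmts_v0} is exactly the paper's: clear denominators in $\ConvGF{h}{\alpha}{R}{z}=\FP_h/\FQ_h$, extract $[z^n]$, and specialize $h:=n+r$ so that the series coefficients are exact rather than only congruent; the $r=0$ case uses $C_{n,n}(\alpha,R)=0$.

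For \eqref{eqn_pnAlphaR_seqs_finite_sum_reps_modulop-stmts_v1}--\eqref{eqn_pnAlphaR_seqs_finite_sum_reps_modulop-stmts_v2} with $h$ prime, your argument and the paper's share the same skeleton (kill the interior terms via $h\mid\binom{h}{k}$, then deal with $k=h$ separately), but your treatment of the boundary term is actually more general than the paper's. The paper restricts itself explicitly to $\alpha\in\{\pm 1,\pm 2\}$ and argues that for those values the relevant $\alpha$-factorial has a visible factor of $h$ once $n\geq h$; you instead observe that $p_h(-\alpha,R+(h-1)\alpha)=p_h(\alpha,R)$ and that either $\{R+j\alpha\}_{j=0}^{h-1}$ is a complete residue system mod $h$ (when $\gcd(\alpha,h)=1$) or $h\mid\alpha$, which covers every nonzero integer $\alpha$ at once. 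That is a genuine improvement over the paper's case analysis.

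Where you flag the difficulty, you are right to do so, and you are not missing anything the paper supplies. For $h$ odd and composite the divisibility $h\mid\binom{h}{k}$ fails in general (e.g.\ $h=9$, $k=3$), so the interior terms cannot be discarded by that mechanism. The paper's own proof asserts ``since $h$ is odd or $h=2$, we have that $h\mid\binom{h}{k}$ for all $1\leq k<h$,'' which is incorrect for composite odd $h$; the paper then proceeds only under the additional restriction $\alpha\in\{\pm1,\pm2\}$ and, in its concluding remarks, explicitly downgrades the general statement to a computationally supported conjecture. So your honest identification of the odd-composite case as the outstanding obstacle matches the actual state of the paper: it is not proved there either.
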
 

\begin{remark}[Stronger Statements of the Key Congruence Properties] 
Based on numerical evidence computed in the summary notebook reference 
\cite{SUMMARYNBREF-STUB}, we conjecture, but do not offer conclusive proof 
here, that the results stated in 
\eqref{eqn_pnAlphaR_seqs_finite_sum_reps_modulop-stmts_v1} and 
\eqref{eqn_pnAlphaR_seqs_finite_sum_reps_modulop-stmts_v2} of the 
key proposition in fact hold for \emph{all integers} $h \geq 2$, 
$\alpha \neq 0$, and $0 \leq t \leq h$ -- not just in the odd and even prime 
special cases of the moduli specified in the previous proposition. 
This observation substantially widens the utility of the application of these 
results in the examples cited in 
Section \ref{subSection_FiniteDiffEqns_for_the_GenFactFns} below. 
Morevover, the observation of this compuationally-verified result 
should be considered in evaluating the significance of the 
implications of these applications in the new contexts suggested by 
Section \ref{subsubSection_remark_OtherApps_of_WThm_and_NewCongProps_to_PrimeSubseqs} 
as generalized forms of our approach. 
\end{remark} 

\subsection{Examples} 
\label{subSection_Intro_Examples} 

\subsubsection{Applications in Wilson's theorem and 
               Clement's theorem concerning the twin primes} 
\label{subSection_Wthm_CThm_SpCase_Apps} 

The first examples given in this section provide restatements of the 
necessary and sufficient integer-congruence-based conditions 
imposed in both statements of Wilson's theorem and Clement's theorem 
through the exact expansions of the factorial functions defined above. 
For odd integers $p \geq 3$, the congruences 
implicit to each of \emph{Wilson's theorem} and \emph{Clement's theorem} 
are enumerated as follows 
\cite[\S 4.3]{PRIMEREC} \cite[\S 6.6]{HARDYWRIGHTNUMT} \cite{CLEMENTPRIMES}: 
\begin{align} 
\notag 
\text{ $p$ prime } & \iff (p-1)! + 1 && \equiv 0 \pmod{p} \\ 
\notag 
   & \iff 
     [z^{p-1}] \ConvGF{p}{-1}{p-1}{z} + 1 && \equiv 0 \pmod{p} \\ 
\notag 
   & \iff 
     [z^{p-1}] \ConvGF{p}{1}{1}{z} + 1 && \equiv 0 \pmod{p} \\ 
\notag 
\text{ $p, p+2$ prime } & \iff 
     4\left((p-1)! + 1\right) + p && \equiv 0 \pmod{p(p+2)} \\ 
\notag 
   & \iff 
     4 [z^{p-1}] \ConvGF{p(p+2)}{-1}{p-1}{z} + p + 4 && \equiv 0 \pmod{p(p+2)} \\ 
\notag 
   & \iff 
     4 [z^{p-1}] \ConvGF{p(p+2)}{1}{1}{z} + p + 4 && \equiv 0 \pmod{p(p+2)}. 
\end{align} 
The rationality in $z$ of the convergent functions, 
$\ConvGF{h}{\alpha}{R}{z}$, 
at each $h$ leads to further alternate formulations of other well-known 
congruence statements concerning the divisibility of factorial functions. 
For example, we may characterize the primality of the 
odd integers, $p > 3$, of the form $p = 4k+1$ 
(\ie the so-termed subset of \quotetext{\emph{Pythagorean primes}}) 
according to the next condition 
\cite[\S 7]{HARDYWRIGHTNUMT} (\seqnum{A002144}). 
\begin{align} 
\label{eqn_WThm_p4akp1_gen}  
     \mathsmaller{\left(\frac{p-1}{2}\right)\mathlarger{!}}^{2} 
     \equiv -1 \pmod{p} & 
     \iff 
\text{ $p$ is a prime of the form $4k+1$} 
\end{align} 
For an odd integer $p > 3$ to be both prime and 
satisfy $p \equiv 1 \pmod{4}$, the 
congruence statement in \eqref{eqn_WThm_p4akp1_gen} 
requires that the diagonals of the following rational two-variable 
convergent generating functions satisfy the following 
equivalent conditions where $p_i$ is chosen so that 
$p \mid 2^{p} p_i$ for each $i = 1, 2$: 
\begin{align} 
\notag 
\mathsmaller{\left(\frac{p-1}{2}\right)\mathlarger{!}}^{2} & = 
     [z^{(p-1)/2}][x^0] \left( 
     \ConvGF{p_1}{-1}{\frac{p-1}{2}}{x} 
     \ConvGF{p_2}{-1}{\frac{p-1}{2}}{\frac{z}{x}} 
     \right) && \equiv -1 \pmod{p} \\ 
\notag 
\mathsmaller{\left(\frac{p-1}{2}\right)\mathlarger{!}}^{2} & = 
     [z^{(p-1)/2}][x^0] \left( 
     \ConvGF{p_1}{-2}{p-1}{x} 
     \ConvGF{p_2}{-2}{p-1}{\frac{z}{4 x}} 
     \right) && \equiv -1 \pmod{p} \\ 
\notag 
\mathsmaller{\left(\frac{p-1}{2}\right)\mathlarger{!}}^{2} & = 
     [z^{(p-1)/2}][x^0] \left( 
     \ConvGF{p_1}{-1}{\frac{p-1}{2}}{x} 
     \ConvGF{p_2}{-2}{p-1}{\frac{z}{2 x}} 
     \right) && \equiv -1 \pmod{p}. 
\end{align} 
The reference provides remarks on the harmonic-number-related 
fractional power series expansions of the 
convergent-based generating functions, 
$\ConvGF{n}{1}{1}{z/x} \times \ConvGF{n}{1}{1}{x}$ and 
$\ConvGF{n}{2}{1}{z/x} \times \ConvGF{n}{1}{1}{x}$, 
related to the single factorial function squares enumerated by the 
identities in the previous equations \cite{SUMMARYNBREF-STUB}. 

These particular congruences involving the expansions of the 
single factorial function are considered in the reference 
\cite[\S 6.1.6]{MULTIFACTJIS} as an example of the 
first product-based symbolic factorial function expansions implicit to both 
Wilson's theorem and Clement's theorem. 
Related formulations of conditions concerning the 
primality of prime pairs, $(p, p+d)$, and then of 
other prime $k$-tuples, 
are similarly straightforward to obtain by elementary methods 
starting from the statement of Wilson's theorem 
\cite{ONWTHM-AND-POLIGNAC-CONJ}. 
For example, the new results proved in 
Section \ref{subSection_FiniteDiffEqns_for_the_GenFactFns} 
are combined with the known congruences established in the reference 
\cite[\S 3, \S 5]{ONWTHM-AND-POLIGNAC-CONJ} to obtain the 
cases of the next particular forms of 
alternate necessary and sufficient conditions for the 
twin primality of the 
odd positive integers $p_1 \defequals 2n+1$ and $p_2 \defequals 2n+3$ 
when $n \geq 1$ (\seqnum{A001359}, \seqnum{A001097}): 
\begin{align*} 
\tagonce\label{eqn_TwinPrime_NewExpsOfKnownCongruenceResults-stmts_v1} 
 & 2n+1, 2n+3 \text{ odd primes } & \\ 
 & \quad \iff 
   \mathsmaller{ 
     2\left(\sum\limits_{i=0}^{n} 
     \binom{(2n+1)(2n+3)}{i}^2 (-1)^i i! (n-i)!\right)^2 + 
     (-1)^{n} (10n+7)} \equiv 0\\ 
   & \phantom{\quad\iff\Biggl(\sum\sum\sum\sum\sum\sum i!(n-i)!\Biggr)+\ } 
     \pmod{(2n+1)(2n+3)} && \\ 
 & \quad \iff
   \mathsmaller{
     4 \left(\sum\limits_{i=0}^{2n} 
     \binom{(2n+1)(2n+3)}{i}^2 (-1)^i i! (2n-i)!\right) + 
     2n+5} \equiv 0 \\ 
   & \phantom{\quad\iff\Biggl(\sum\sum\sum\sum\sum\sum i!(n-i)!\Biggr)+\ } 
     \pmod{(2n+1)(2n+3)}. && 
\end{align*} 
Section \ref{subsubSection_Examples_ConsequencesOfWThm} 
of this article considers the particular cases of 
these two classically-phrased congruence statements as 
applications of the new polynomial expansions for the 
generalized product sequences, $\pn{n}{\alpha}{\beta n+\gamma}$, 
derived from the expansions of the 
convergent function sequences by finite difference equations. 

\subsubsection{Congruences for the Wilson primes and the 
               single factorial function modulo $n^2$} 

The sequence of \emph{Wilson primes}, 
or the subsequence of odd integers $p \geq 5$ satisfying 
$n^2 \mid (n-1)! + 1$, is characterized through each of the 
following additional divisibility 
requirements placed on the expansions of the 
single factorial function implicit to Wilson's theorem 
cited by the applications of the new results given below in 
Section \ref{subsubSection_Examples-remarks_RelatedCongruences} 
of the article (\seqnum{A007540}): 
\begin{align*} 
\undersetbrace{\equiv\ (n-1)! \pmod{n^2}}{
     \sum_{i=0}^{n-1} \binom{n^2}{i}^{2} (-1)^{i} i! (n-1-i)! 
     } 
     \phantom{\qquad\qquad\qquad\qquad} 
     & \equiv -1 && \hspace{-6mm} \pmod{n^2} \\ 
\undersetbrace{\equiv (n-1)! \pmod{n^2}}{
     \sum_{i=0}^{n-1} \binom{n^2}{i} 
     \FFactII{(n^2-n)}{i} \times (-1)^{n-1-i} \FFactII{(n-1)}{n-1-i} 
     } 
     \phantom{\qquad} 
     & \equiv -1 && \hspace{-6mm} \pmod{n^2} \\ 
\mathsmaller{ 
     \sum\limits_{s=0}^{n-1} \sum\limits_{i=0}^{s} \sum\limits_{v=0}^{i} 
     \left( 
     \sum\limits_{k=0}^{n-1} \sum\limits_{m=0}^{k} 
     \binom{n^2}{k} \binom{m}{s} \binom{i}{v} \binom{n^2+v}{v} 
     \gkpSI{k}{m} \gkpSII{s}{i} (-1)^{i-v} 
     \FFactII{(n-1)}{n-1-k} (-n)^{m-s} i! 
     \right) 
     } 
     & \equiv -1. && \hspace{-6mm} \pmod{n^2} 
\end{align*} 
The results providing the new congruence properties for the 
$\alpha$-factorial functions modulo the integers 
$p$, and $p \alpha^{i}$ for some $0 \leq i \leq p$, expanded in 
Section \ref{ssS_example_GenDblFactFnSumIdents_FiniteSumsInvolving_AlphaFactFns} 
also lead to alternate phrasings of the 
necessary and sufficient conditions on the 
primality of several notable subsequences of the 
odd positive integers $n \geq 3$ \cite[\cf \S 6.4]{MULTIFACT-CFRACS}. 

\subsection{Expansions of congruences for the 
            $\alpha$-factorial functions and related sequences} 

\subsubsection{Expansions of other new congruences for the 
               double and triple factorial functions} 
\label{subsubSection_Intro_Examples_CongForDblTripleFactFns} 

A few representative examples of the 
new congruences for the double and triple factorial 
functions obtained from the statements of 
Corollary \ref{cor_CongForThe_AlphaFactFns} given in 
Section \ref{subSection_FiniteDiffEqns_for_the_GenFactFns} 
also include the following 
particular expansions for integers $p_1, p_2\geq 2$, and where 
$0 \leq s \leq p_1$ and $0 \leq t \leq p_2$ 
assume some prescribed values over the non-negative integers 
(see the computations contained in the reference \cite{SUMMARYNBREF-STUB}): 
\begin{align*} 
(2n-1)!! 
     & \equiv 
     \sum_{i=0}^{n} \binom{p_1}{i} 
     2^{n} (-2)^{(s+1)i} \Pochhammer{\mathsmaller{\frac{1}{2}}-p_1}{i} 
     \Pochhammer{\mathsmaller{\frac{1}{2}}}{n-i} 
     && \pmod{p_1 2^{s}} \\ 
     & \equiv 
     \sum_{i=0}^{n} \binom{p_1}{i} 
     (-2)^{n} 2^{(s+1)i} \Pochhammer{\mathsmaller{\frac{1}{2}}+n-p_1}{i} 
     \Pochhammer{\mathsmaller{\frac{1}{2}}-n+i}{n-i} 
     && \pmod{p_1 2^{s}} \\ 
(3n-1)!!! 
     & \equiv 
     \sum_{i=0}^{n} \binom{p_2}{i} 
     3^{n} (-3)^{(t+1)i} \Pochhammer{\mathsmaller{\frac{1}{3}}-p_2}{i} 
     \Pochhammer{\mathsmaller{\frac{2}{3}}}{n-i} 
     && \pmod{p_2 3^{t}} \\ 
     & \equiv 
     \sum_{i=0}^{n} \binom{p_2}{i} 
     (-3)^{n} 3^{(t+1)i} \Pochhammer{\mathsmaller{\frac{1}{3}}+n-p_2}{i} 
     \Pochhammer{\mathsmaller{\frac{1}{3}}-n+i}{n-i} 
     && \pmod{p_2 3^{t}} \\ 
(3n-2)!!! 
     & \equiv 
     \sum_{i=0}^{n} \binom{p_2}{i} 
     3^{n} (-3)^{(t+1)i} \Pochhammer{\mathsmaller{\frac{2}{3}}-p_2}{i} 
     \Pochhammer{\mathsmaller{\frac{1}{3}}}{n-i} 
     && \pmod{p_2 3^{t}} \\ 
     & \equiv 
     \sum_{i=0}^{n} \binom{p_2}{i} 
     (-3)^{n} 3^{(t+1)i} \Pochhammer{\mathsmaller{\frac{2}{3}}+n-p_2}{i} 
     \Pochhammer{\mathsmaller{\frac{2}{3}}-n+i}{n-i} 
     && \pmod{p_2  3^{t}}. 
\end{align*} 

\subsubsection{Semi-polynomial congruences for double factorial 
               functions and the central binomial coefficients} 

The integer congruences satisfied by the double factorial function, 
$(2n-1)!!$, and the Pochhammer symbol cases, 
$2^{n} \times \Pochhammer{\frac{1}{2}}{n}$, expanded in 
Section \ref{subsubSection-example_OtherRelatedCongruences_DblFactFns} 
provide the next variants of the polynomial congruences for the 
\emph{central binomial coefficients}, 
$\binom{2n}{n} = 2^{n} \times (2n-1)!! / n!$, 
reduced modulo the respective integer multiples of 
$2n+1$ and the polynomial powers, $n^{p}$, 
for fixed integers $p \geq 2$ in the following equations 
(\seqnum{A000984}) 
(see the computations in the reference \cite{SUMMARYNBREF-STUB})\footnote{ 
     The unconventional notation for computing the underlined polynomial $\mod$ 
     operations in the next equations is defined by the footnote on 
     page \pageref{footnote_PolyMod_ArrowNotation}. 
}. 
\begin{align*} 
\binom{2n}{n} & \equiv 
     \left\lbrace 
     \sum_{i=0}^{n} 
     \undersetline{\mod{2x+1} \quad \looparrowright \quad x \defmapsto n}{
     \binom{2x+1}{i} 
     (-2)^{i} \FFactII{\left(\mathsmaller{\frac{1}{2}} + 2x\right)}{i} 
     \Pochhammer{\mathsmaller{\frac{1}{2}}}{n-i} 
     \times \frac{2^{2n}}{n!}
     } 
     \right\rbrace 
     && \pmod{2n+1} \\ 
\binom{2n}{n} & \equiv 
     \left\lbrace 
     \undersetline{\mod{x^p} \quad \looparrowright \quad x \defmapsto n}{
     \sum_{i=0}^{n} \binom{x^p}{i} \binom{2n-2i}{n-i} 
     \Pochhammer{\mathsmaller{\frac{1}{2}} - x^p}{i} \times 
     \frac{8^{i} \cdot (n-i)!}{n!} 
     } 
     \right\rbrace 
     && \pmod{n^p} 
\end{align*} 

\section{Finite difference equations for generalized factorial functions and 
         applications} 
\label{subSection_FiniteDiffEqns_for_the_GenFactFns} 

The rationality of the convergent functions, $\ConvGF{h}{\alpha}{R}{z}$, in 
$z$ for all $h$ suggests new forms of $h$-order finite difference equations 
with respect to $h$, $\alpha$, and $R$ 
satisfied by the product sequences, $p_n(\alpha, R)$, 
when $\alpha$ and $R$ correspond to fixed parameters independent of the 
sequence indices $n$. 
In particular, the 
rationality of the $h^{th}$ convergent functions immediately 
implies the first two results stated in 
Proposition \ref{prop_ExactFormulas_CongruencesModh_from_FiniteDiffEqns} 
above, which also provides both forms of the congruence properties stated in 
\eqref{eqn_pnAlphaR_seqs_finite_sum_reps_modulop-stmts_v1} modulo 
odd and even prime integers $h \geq 2$ \cite[\S 2.3]{GFLECT} 
\cite[\S 7.2]{GKP}. 

When the initially indeterminate parameter, $R$, 
assumes an implicit dependence on the sequence index, $n$, the 
results phrased by the previous equations, 
somewhat counter-intuitively, do not immediately imply difference equations 
satisfied between only the generalized product sequences, 
either exactly, or modulo the prescribed choices of $h \geq 2$ 
(\cf \eqref{eqn_2nm1_DblFactFn_round_number_idents-stmt_v1} and 
Example \ref{example_GenDblFactFnSumIdents_FiniteSumsInvolving_AlphaFactFns} 
on page \pageref{example_GenDblFactFnSumIdents_FiniteSumsInvolving_AlphaFactFns}). 
The new formulas connecting the generalized product sequences, 
$\pn{n}{\alpha}{\beta n + \gamma}$, resulting from 
\eqref{eqn_pnAlphaR_seqs_finite_sum_reps_modulop-stmts_v1} and 
\eqref{eqn_pnAlphaR_seqs_finite_sum_reps_modulop-stmts_v2} 
in these cases are, however, reminiscent of the relations satisfied 
between the generalized Stirling polynomial and 
convolution polynomial sequences expanded in the references 
\cite{MULTIFACTJIS} \cite[\cf \S 6.2]{GKP}. 

\subsection{Proof of the key proposition} 
\label{Section_KeyProp_Proof} 
\label{subSection_FiniteDiffEqns_for_the_GenFactFns-ExactFormulas_Stmts} 

\begin{proof}[Proof of \eqref{eqn_pnAlphaR_seqs_finite_sum_reps_modulop-stmts_v0}] 
Since the $h^{th}$ convergent functions, $\ConvGF{h}{\alpha}{R}{z}$, are 
rational for all $h \geq 2$, we obtain the next $h$-order finite 
difference equation exactly generating the coefficients, 
$[z^n] \ConvGF{h}{\alpha}{R}{z}$, from 
\eqref{eqn_PFact_Qhz_product_ident} and 
\eqref{eqn_Vandermonde-like_PHSymb_exps_of_PhzCfs} above when $n \geq 0$ 
given by \cite[\S 2.3]{GFLECT} 
\begin{align*} 
p_{n}(\alpha, R) & \equiv 
     \sum\limits_{k=1}^{n} 
     \binom{h}{k} 
     \mathsmaller{ 
     (-1)^{k+1} 
     p_{k}(-\alpha, R + (h-1) \alpha) p_{n-k}(\alpha, R) 
     } && \pmod{h} \\ 
\notag 
     & \phantom{\equiv\sum\quad} + 
     C_{h,n}(\alpha, R) \Iverson{h > n \geq 1} + \Iverson{n = 0}, 
\end{align*} 
where $\ConvGF{h}{\alpha}{R}{z}$ exactly enumerates the sequence of 
$p_n(\alpha, R)$ for $0 \leq n \leq h$ and where $C_{n,n}(\alpha, R) = 0$ 
for all $n$. Thus we see that the previous equation implies both formulas 
in \eqref{eqn_pnAlphaR_seqs_finite_sum_reps_modulop-stmts_v0}. 
\end{proof} 

\begin{proof}[Proof of \eqref{eqn_pnAlphaR_seqs_finite_sum_reps_modulop-stmts_v1} and \eqref{eqn_pnAlphaR_seqs_finite_sum_reps_modulop-stmts_v2}] 
We will prove the first statement in the two special cases of 
$\alpha := \pm 1, \pm 2$ 
which we explicitly employ in our applications given in the subsections 
below. The method we use easily generalizes to further cases of 
$|\alpha| \geq 3$, but we only conjecture that the formulas hold for these 
subsequent special values of $\alpha$. 
We begin by noticing that since $h$ is odd or $h = 2$, we have that 
$h | \binom{h}{k}$ for all $1 \leq k < h$ where 
$\binom{h}{0} = \binom{h}{h} = 1$. So it suffices to evaluate the sum only 
for the indices $k$ corresponding to these two corner cases. 
If $n < h$, then the sum is trivially exactly equal to the product function, 
$p_n(\alpha, R)$. Next, we let $R := \beta n + \gamma$ and 
suppose that $n \geq h$ in order to evaluate the 
terms in the sum at both indices $k := 0, h$ where the binomial coefficient 
$\binom{h}{k} \neq 0 \pmod{h}$ and as follows: 
\begin{align*} 
\RHS(n) & = 
p_n(\alpha, R) + (-\alpha)^h p_h(-\alpha, R+(h-1)\alpha) \times 
     p_{n-h}(\alpha, R) \\ 
     & = 
     p_n(\alpha, R) - \alpha^h \times 
     \prod_{j=0}^{h-1} (R +(h-1)\alpha - (h-1-j)\alpha) 
     \times \prod_{j=0}^{n-h-1} (R + (n-h-1-j)\alpha) \\ 
     & \equiv 
     p_n(\alpha, R) - \alpha^h \times \prod_{j=0}^{h-1} (R +j\alpha) 
     \times \prod_{j=0}^{n-h-1} (R + (n-1-j)\alpha) \pmod{h} \\ 
     & = 
     p_n(\alpha, R) - \alpha^h \times 
     (R+(n-1)\alpha) \cdots (R+h\alpha) \times 
     (R+(h-1)\alpha) \cdots R \\ 
     & = 
     (1-\alpha^h) p_n(\alpha, R). 
\end{align*} 
In both cases of $\alpha = \pm 1, 2$, we easily see that when $n \geq h$ the 
respective $\alpha$-factorial function has a factor of $h$ as 
$h | n!, (2n)!!, (2n-1)!!$, which implies that both of 
$p_n(\alpha, R), (1-\alpha^h) p_n(\alpha, R) \equiv 0 \pmod{h}$. 
The second formula in 
\eqref{eqn_pnAlphaR_seqs_finite_sum_reps_modulop-stmts_v1} is a rearrangement 
of the inner terms of the first formula. The third formula in 
\eqref{eqn_pnAlphaR_seqs_finite_sum_reps_modulop-stmts_v2} also follows easily 
from the first formula modulo $h \alpha^t$. 
\end{proof} 

\subsection{Combinatorial identities for the 
            double factorial function and 
            finite sums involving the $\alpha$-factorial functions} 
\label{ssS_example_GenDblFactFnSumIdents_FiniteSumsInvolving_AlphaFactFns} 

The double factorial function, $(2n-1)!!$, satisfies a number of 
known expansions through the finite sum identities summarized in 
\cite{MAA-FUN-WITH-DBLFACT,DBLFACTFN-COMBIDENTS-SURVEY}. 
For example, when $n \geq 1$, the 
double factorial function is generated by the 
expansion of finite sums of the form 
\cite[\S 4.1]{DBLFACTFN-COMBIDENTS-SURVEY} 
\begin{align} 
\label{eqn_2nm1_DblFactFn_round_number_idents-stmt_v1}
(2n-1)!! & = 
     \sum_{k=0}^{n-1} \binom{n}{k+1} (2k-1)!! (2n-2k-3)!!. 
\end{align} 
The particular combinatorial identity for the double factorial 
function expanded in the form of equation 
\eqref{eqn_2nm1_DblFactFn_round_number_idents-stmt_v1} above 
is remarkably similar to the statement of the second sum in 
\eqref{eqn_pnAlphaR_seqs_finite_sum_reps_modulop-stmts_v0} 
satisfied by the more general product function cases, 
$\pn{n}{\alpha_0}{R_0}$, 
generating the $\alpha$-factorial functions, 
$\AlphaFactorial{\alpha n-1}{\alpha}$, when 
$(n, \alpha_0, R_0) \defmapsto (n, \alpha, \alpha-1), (n, -\alpha, \alpha n-1)$. 

\begin{example}[Exact Finite Sums Involving the $\alpha$-Factorial Functions] 
\label{example_GenDblFactFnSumIdents_FiniteSumsInvolving_AlphaFactFns} 
More generally, 
if we assume that $\alpha \geq 2$ is integer-valued, and proceed to 
expand these cases of the $\alpha$-factorial functions 
according to the expansions from 
\eqref{eqn_pnAlphaR_seqs_finite_sum_reps_modulop-stmts_v0} above, 
we readily see that \cite[\cf \S 5.5]{GKP} 
\begin{align*} 
(\alpha n - 1)!_{(\alpha)} = 
     \sum_{k=0}^{n-1} \binom{n-1}{k+1} (-1)^{k} 
     & \times 
     \Pochhammer{\mathsmaller{\frac{1}{\alpha}}}{-(k+1)} 
     \Pochhammer{\mathsmaller{\frac{1}{\alpha}}-n}{k+1} \\ 
     & \times 
     (\alpha (k+1) - 1)!_{(\alpha)} 
     (\alpha (n-k-1) - 1)!_{(\alpha)} \times \\ 
(\alpha n - 1)!_{(\alpha)} = 
     \sum_{k=0}^{n-1} \binom{n-1}{k+1} (-1)^{k} & \times 
     \binom{\frac{1}{\alpha} + k - n}{k+1} 
     \binom{\frac{1}{\alpha} - 1}{k+1}^{-1} \\ 
     & \times 
     (\alpha (k+1) - 1)!_{(\alpha)} 
     (\alpha (n-k-1) - 1)!_{(\alpha)}. 
\end{align*} 
\label{footnote_PHSymbol_AlphaFactFn_SimplIdents} 
We note the simplification 
$\Pochhammer{\frac{1}{\alpha}}{-(k+1)} = 
 \frac{(-\alpha)^{k+1}}{\AlphaFactorial{\alpha(k+1)-1}{\alpha}}$ 
where the expansions of the $\alpha$-factorial functions, 
$\AlphaFactorial{\alpha n-1}{\alpha}$, by the Pochhammer symbol 
correspond to the results given in the reference 
\cite{WOLFRAMFNSSITE-INTRO-FACTBINOMS}. 
The Pochhammer symbol identities cited in the 
reference \cite{WOLFRAMFNSSITE-INTRO-FACTBINOMS} 
provide other related simplifications of the terms in these sums. 

The first sum above combined with the expansions of the 
Pochhammer symbols, $\Pochhammer{\pm x}{n}$, given in the reference 
\cite[Lemma 12]{MULTIFACT-CFRACS}, and the 
form of Vandermonde-convolution-like identities restated in 
\eqref{eqn_Vandermonde-like_PHSymb_exps_of_PhzCfs} 
also lead to the following pair of double sum identities for the 
$\alpha$-factorial functions when $\alpha, n \geq 2$ are integer-valued: 
\begin{align*} 
(\alpha n - 1)!_{(\alpha)} & = 
     \mathsmaller{
     \sum\limits_{k=0}^{n-1} \sum\limits_{i=0}^{k+1} 
     \binom{n-1}{k+1} \binom{k+1}{i} 
     (-1)^{k} \alpha^{k+1-i} 
     \AlphaFactorial{\alpha i-1}{\alpha} 
     \AlphaFactorial{\alpha (n-1-k)-1}{\alpha} \times 
     } \\ 
     & \phantom{=\sum\sum\ } \times 
     \Pochhammer{n-1-k}{k+1-i} \\ 
     & = 
     \mathsmaller{
     \sum\limits_{k=0}^{n-1} \sum\limits_{i=0}^{k+1} 
     \binom{n-1}{k+1} \binom{k+1}{i} \binom{n-1-i}{k+1-i} 
     (-1)^{k} \alpha^{k+1-i} 
     \AlphaFactorial{\alpha i-1}{\alpha} 
     \AlphaFactorial{\alpha (n-1-k)-1}{\alpha} \times 
     } \\ 
     & \phantom{=\sum\sum\ } \times 
     (k+1-i)!. 
\end{align*} 
The construction of further 
analogues for generalized variants of the finite summations and 
more well-known combinatorial identities satisfied by the 
double factorial function cases when $\alpha \defequals 2$ from the 
references is suggested as a topic for future investigation in 
Section \ref{subsubSection_FutureResTopics_GenDblFactFnSumIdents_FiniteSums}. 
\end{example} 

\begin{cor}[Congruences for the $\alpha$-Factorial Functions] 
\label{cor_CongForThe_AlphaFactFns} 
If we let $\alpha \in \mathbb{Z}^{+}$, $0 \leq d < \alpha$, and 
suppose that $h$ is odd or prime with $0 \leq t \leq h$, we can generalize the 
congruence results for the double and triple factorial functions cited as 
examples in Section \ref{subsubSection_Intro_Examples_CongForDblTripleFactFns} 
of the introduction according to the next equations. 
\begin{align*} 
\AlphaFactorial{\alpha n - d}{\alpha} & \equiv 
     \sum_{i=0}^{n} \binom{h}{i} \alpha^n (-\alpha)^{(t+1)i} 
     \Pochhammer{\mathsmaller{\frac{d}{\alpha}}-h}{i} 
     \Pochhammer{\mathsmaller{\frac{\alpha-d}{\alpha}}}{n-i} 
     && \pmod{h \alpha^t} \\ 
\AlphaFactorial{\alpha n - d}{\alpha} & \equiv 
     \sum_{i=0}^{n} \binom{h}{i} (-\alpha)^n \alpha^{(t+1)i} 
     \Pochhammer{\mathsmaller{\frac{d}{\alpha}}+n+1-h}{i} 
     \Pochhammer{\mathsmaller{\frac{d}{\alpha}}-n}{n-i} 
     && \pmod{h \alpha^t} 
\end{align*} 
\end{cor}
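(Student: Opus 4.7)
My plan is to derive each of the two stated congruences by applying the third congruence \eqref{eqn_pnAlphaR_seqs_finite_sum_reps_modulop-stmts_v2} of Proposition \ref{prop_KeyProp} to one of the two natural parametrizations of $\AlphaFactorial{\alpha n - d}{\alpha}$ as a generalized product sequence $p_{n}(\alpha_{0}, \beta n + \gamma)$. Writing the $\alpha$-factorial in ascending order gives $\AlphaFactorial{\alpha n - d}{\alpha} = p_{n}(\alpha, \alpha - d)$, corresponding to $(\alpha_{0}, \beta, \gamma) \equiv (\alpha, 0, \alpha - d)$; writing it in descending order gives $\AlphaFactorial{\alpha n - d}{\alpha} = p_{n}(-\alpha, \alpha n - d)$, corresponding to $(\alpha_{0}, \beta, \gamma) \equiv (-\alpha, \alpha, -d)$, both as recorded in Section~\ref{Section_Intro}.

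Substituting the ascending-order parametrization into \eqref{eqn_pnAlphaR_seqs_finite_sum_reps_modulop-stmts_v2} and simplifying $1 - h - \tfrac{\alpha - d}{\alpha} = \tfrac{d}{\alpha} - h$ produces a congruence whose summand reads $\binom{h}{i}\,\alpha^{n+(t+1)i}\,\Pochhammer{\tfrac{d}{\alpha} - h}{i}\,\Pochhammer{\tfrac{\alpha - d}{\alpha}}{n - i}$; this matches the first claimed congruence once the alpha-power $\alpha^{n+(t+1)i}$ is rewritten as $\alpha^{n}(-\alpha)^{(t+1)i}$. Substituting the descending-order parametrization into the same formula (with $\alpha$ replaced by $-\alpha$), and using $(\alpha n - d)/(-\alpha) = \tfrac{d}{\alpha} - n$ together with $1 - h - (\tfrac{d}{\alpha} - n) = n + 1 - h - \tfrac{d}{\alpha}$, I would obtain an analogous expression with summand $\binom{h}{i}\,(-\alpha)^{n+(t+1)i}\,\Pochhammer{n + 1 - h - \tfrac{d}{\alpha}}{i}\,\Pochhammer{\tfrac{d}{\alpha} - n}{n - i}$; this matches the second claimed congruence after the symmetric sign conversion $(-\alpha)^{n+(t+1)i} \to (-\alpha)^{n}\alpha^{(t+1)i}$ and a rewriting of the first Pochhammer base.

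The principal technical obstacle I anticipate is the reconciliation step: verifying that each sign conversion $\alpha^{(t+1)i} \leftrightarrow (-\alpha)^{(t+1)i}$, along with any accompanying Pochhammer reindexing, introduces only discrepancies that vanish modulo $h\alpha^{t}$. The per-index correction has the form $\bigl(1 - (-1)^{(t+1)i}\bigr)\binom{h}{i}\,\alpha^{(t+1)i}\,J_{i}$ for an explicit integer $J_{i}$ obtained by combining $\alpha^{(t+1)i}$ with the $\alpha$-denominators implicit in the two Pochhammer symbols. For $0 < i < h$ the factor $\binom{h}{i}$ supplies a factor of $h$ (using that $h$ is prime, exactly as in the proof of Proposition \ref{prop_KeyProp}), and $\alpha^{(t+1)i} \geq \alpha^{t+1}$, so the discrepancy is divisible by $h\alpha^{t+1}$ and hence vanishes modulo $h\alpha^{t}$. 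The delicate case is the endpoint $i = h$ (relevant only when $n \geq h$), where $\binom{h}{h} = 1$ provides no factor of $h$; here I would split on $\gcd(h, \alpha)$: if $h \mid \alpha$, then $\alpha^{(t+1)h}$ itself absorbs a factor of $h$; if $\gcd(h, \alpha) = 1$, then the integer product $\alpha^{h}\,\Pochhammer{\tfrac{d}{\alpha} - h}{h} = \prod_{m=1}^{h}(d - m\alpha)$ traverses a complete residue system modulo $h$ and hence contains a factor of $h$. Either branch supplies the missing factor, and the same argument adapts to the descending-order case to complete the proof.
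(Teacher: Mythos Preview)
Your approach is precisely the paper's: cite the two representations $(\alpha n-d)!_{(\alpha)}=p_n(\alpha,\alpha-d)=p_n(-\alpha,\alpha n-d)$ (which the paper attributes to Lemma~10 of \cite{MULTIFACT-CFRACS}) and feed each into \eqref{eqn_pnAlphaR_seqs_finite_sum_reps_modulop-stmts_v2}. The paper's proof stops there, displaying the substituted congruence for $p_n(\alpha,R)$ and asserting that the two formulas follow ``easily''.

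You actually go further than the paper. Direct substitution yields the power $\alpha^{n+(t+1)i}$ (resp.\ $(-\alpha)^{n+(t+1)i}$), not the $\alpha^n(-\alpha)^{(t+1)i}$ (resp.\ $(-\alpha)^n\alpha^{(t+1)i}$) printed in the corollary, and you correctly show the term-by-term discrepancy is divisible by $h\alpha^t$: for $0<i<h$ the factor $\binom{h}{i}$ contributes $h$ (for $h$ prime, the same tacit restriction used in the proof of Proposition~\ref{prop_KeyProp}), while for $i=h$ either $h\mid\alpha$ or the product $\alpha^h\Pochhammer{d/\alpha-h}{h}=\prod_{m=1}^h(d-m\alpha)$ runs over a complete residue system modulo $h$. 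This fills a gap the paper's two-line proof leaves open.

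One caution: your phrase ``a rewriting of the first Pochhammer base'' glosses over the fact that substitution into the descending parametrization yields $\Pochhammer{n+1-h-d/\alpha}{i}$, whereas the corollary prints $\Pochhammer{d/\alpha+n+1-h}{i}$. These differ by $2d/\alpha$ and are not related by any routine identity; the paper's proof does not address this either, and the intro examples in Section~\ref{subsubSection_Intro_Examples_CongForDblTripleFactFns} exhibit yet other variants. This appears to be an inconsistency in the stated formula rather than a defect in your argument.
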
 
\begin{proof} 
By Lemma 10 stated in the reference \cite[\S 6.1]{MULTIFACT-CFRACS}, we know 
that for $\alpha \in \mathbb{Z}^{+}$ and any integers $0 \leq t < \alpha$, 
we have that $\AlphaFactorial{\alpha n-d}{\alpha} = p_n(\alpha, \alpha-d)$ and 
that $\AlphaFactorial{\alpha n-d}{\alpha} = p_n(-\alpha, \alpha n-d)$. 
Then if we let $\beta n + \gamma \defmapsto R$ in 
\eqref{eqn_pnAlphaR_seqs_finite_sum_reps_modulop-stmts_v2}, 
we can use the two results from the lemma in combination with the result that 
\begin{align*} 
p_n(\alpha, R) & \equiv \sum_{k=0}^{n} \binom{h}{k} \alpha^{n+(t+1)k} 
     \Pochhammer{1-h-\mathsmaller{\frac{R}{\alpha}}}{k} 
     \Pochhammer{\mathsmaller{\frac{R}{\alpha}}}{n-k} 
     && \pmod{h \alpha^t}, 
\end{align*} 
to easily prove our two formulas. 
\end{proof} 

\subsection{Multiple summation identities and finite-degree polynomial 
            expansions of the generalized product sequences in $n$} 

We are primarily concerned with cases of generalized factorial-related 
sequences formed by the products, $p_n(\alpha, R_n)$, when the 
parameter $R_n \defequals \beta n+\gamma$ depends linearly on $n$ for some 
$\beta, \gamma \in \mathbb{Q}$ (not both zero). 
Strictly speaking, once we evaluate the indeterminate, $R$, as a 
function of $n$ in these cases of the generalized product sequences, 
$p_n(\alpha, R)$, the corresponding generating functions over the 
coefficients enumerated by the approximate convergent function series 
no longer correspond to predictably rational functions of $z$. 
We may, however, still prefer to work with these sequences formulated as 
finite-degree polynomials in $n$ through a few useful forms of the 
next multiple sums expanded below, which are similar to the forms of the 
identities given in \eqref{eqn_Chn_formula_stmts} of the 
introduction for the coefficients of the numerator convergent functions. 

\subsubsection{Generalized polynomial expansions and multiple sum identities 
               with applications to prime congruences} 

\begin{prop} 
\label{cor_GenFactFnSeqs_MultipleSummationIdents} 
For integers $n,s,n-s \geq 1$ and fixed 
$\alpha, \beta, \gamma \in \mathbb{Q}$, the 
following exact finite, multiple sum identities provide 
particular polynomial expansions in $n$ satisfied by the 
generalized factorial function cases: 
\begin{align*} 
\tagonce\label{eqn_GenFactFnSeqs_MultipleSum_Idents_exps-stmts_v1} 
p_{n-s}(\alpha, \beta n + \gamma) = 
     \sum\limits_{
          \substack{0 \leq m \leq k < n-s \\ 0 \leq r \leq p \leq n-s}
          } 
     \sum_{t=0}^{n-s-k} 
     \binom{m}{r} & \binom{n-s}{k} \binom{t}{p-r} 
     \gkpSI{k}{m} \gkpSI{n-s-k}{t} 
     \times \\ 
     & \times 
     (-1)^{p-r-1} 
     \alpha^{n-s-m-t} \beta^{r} \gamma^{m-r} 
     (\alpha + \beta)^{p-r} \times \\ 
     & \times 
     (\alpha (s+1) - \gamma)^{t-(p-r)} \times n^{p} \\ 
     & + 
     \Iverson{0 \leq n \leq s} \\ 
p_{n-s}(\alpha, \beta n + \gamma) = 
     \sum\limits_{ 
          \substack{0 \leq r \leq p \leq u \leq 3n \\ 
          0 \leq m,i \leq k < n-s} 
          } 
     \sum_{t=0}^{n-s-k} 
     \binom{m}{r} & \binom{i}{u-p} \binom{t}{p-r} 
     \gkpSI{k}{m} \gkpSI{k}{i} \gkpSI{n-s-k}{t} \times \\ 
     & \times 
     \frac{(-1)^{u-r+k+1}}{k!} \alpha^{n-s-m-t} \beta^{r} \gamma^{m-r} 
     (\alpha + \beta)^{p-r} \times \\ 
     & \times 
     (\alpha (s+1) - \gamma)^{t-(p-r)} \times s^{p-u+i} n^{u} \\ 
     & + 
     \Iverson{0 \leq n \leq s}. 
\end{align*} 
\end{prop}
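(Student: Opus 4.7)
The plan is to derive both identities by unwinding the Key Proposition's finite-difference expansion \eqref{eqn_pnAlphaR_seqs_finite_sum_reps_modulop-stmts_v0} (with its free parameter set to $0$) and then reducing each product factor and each power of $R$ to a polynomial in $n$ via the Stirling-first-kind bracket identity for the rising factorial and the binomial theorem. First I would replace $n$ by $n-s$ in \eqref{eqn_pnAlphaR_seqs_finite_sum_reps_modulop-stmts_v0} and reverse the summation index by $k\mapsto n-s-1-k$, which turns the weight $\binom{n-s}{k+1}$ into $\binom{n-s}{k}$ and swaps the roles of the two product factors so that the expansion $p_{\ell}(\pm\alpha, S)=\sum_{j}\gkpSI{\ell}{j}(\pm\alpha)^{\ell-j}S^{j}$ produces the bracket symbol $\gkpSI{k}{m}$ from $p_{k}(\alpha,\beta n+\gamma)$ and $\gkpSI{n-s-k}{t}$ from $p_{n-s-k}(-\alpha,R+(n-s-1)\alpha)$ in the arrangement required by the target.

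Under the substitution $R\defequals\beta n+\gamma$ the shifted argument of the second factor becomes $(\alpha+\beta)n-(\alpha(s+1)-\gamma)$; a binomial expansion of $(\beta n+\gamma)^{m}$ and of $((\alpha+\beta)n-(\alpha(s+1)-\gamma))^{t}$ then separates out all powers of $n$. Renaming the second binomial index by $p\defequals r+j$ lines up all letters with those appearing in the target and yields the required constant factor $\alpha^{n-s-m-t}\beta^{r}\gamma^{m-r}(\alpha+\beta)^{p-r}(\alpha(s+1)-\gamma)^{t-(p-r)}n^{p}$, proving \eqref{eqn_GenFactFnSeqs_MultipleSum_Idents_exps-stmts_v1}. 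For the second identity I would further expand $\binom{n-s}{k}$ as a polynomial in $n$ using $\binom{n-s}{k}=\frac{1}{k!}\sum_{i}(-1)^{k-i}\gkpSI{k}{i}(n-s)^{i}$ followed by $(n-s)^{i}=\sum_{v}\binom{i}{v}(-s)^{i-v}n^{v}$; substituting into the first identity and setting $u\defequals p+v$ introduces the additional bracket $\gkpSI{k}{i}$, the binomial $\binom{i}{u-p}$, the factor $s^{p-u+i}$, and the $1/k!$ prefactor, while replacing $n^{p}$ by $n^{u}$. The Iverson term absorbs the boundary contributions from the initial expansion and vanishes under the standing hypothesis $n-s\geq 1$.

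The main obstacle is tracking the overall sign. In the first identity three independent $\pm 1$ factors arise: $(-1)^{n-s-1-k}$ from the reversal of the Key-Proposition index, $(-1)^{n-s-k-t}$ hidden in $(-\alpha)^{n-s-k-t}$ from the Stirling expansion of the reversed factor, and $(-1)^{t-j}$ from the binomial expansion of the negated constant shift. A parity reduction collapses their product to $(-1)^{p-r-1}$. For the second identity an additional $(-1)^{k-i}(-1)^{i-v}=(-1)^{k-v}$ from the expansions of $\binom{n-s}{k}$ and of $(-s)^{i-v}$ combines with the previous sign and, after substituting $v=u-p$ and reducing modulo $2$, yields exactly the stated $(-1)^{u-r+k+1}$. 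Once this sign bookkeeping is done carefully, all remaining steps are routine collection and relabeling of summation variables and no further identities are required.
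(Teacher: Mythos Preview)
Your approach is essentially the same as the paper's: the paper gives only a ``Proof Sketch'' that explicitly says to start from the Key Proposition's finite-difference identity \eqref{eqn_pnAlphaR_seqs_finite_sum_reps_modulop-stmts_v0} and then apply the Stirling-number expansion $p_k(\alpha,\beta n+\gamma+\rho)=\sum_m \gkpSI{k}{m}\alpha^{k-m}(\beta n+\gamma+\rho)^m$ together with the binomial theorem, omitting the ``tedious'' details. You have supplied exactly those omitted details---the index reversal, the two binomial expansions centered at the correct shifts, the further Stirling/binomial expansion of $\binom{n-s}{k}$ for the second identity, and the parity bookkeeping for the signs---and your reductions of the sign products to $(-1)^{p-r-1}$ and $(-1)^{u-r+k+1}$ check out.
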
 
\begin{proof}[Proof Sketch] 
The forms of these expansions for the generalized factorial function 
sequence variants stated in 
\eqref{eqn_GenFactFnSeqs_MultipleSum_Idents_exps-stmts_v1} 
are provided by this proposition without citing the complete details to 
a somewhat tedious, and unnecessary, proof derived from the 
well-known polynomial expansions of the products, 
$p_n(\alpha, R) = \alpha^{n} \Pochhammer{R / \alpha}{n}$ by the 
Stirling number triangles. 
More concretely, 
for $n, k \geq 0$ and fixed 
$\alpha, \beta, \gamma, \rho, n_0 \in \mathbb{Q}$, the 
following particular expansions 
suffice to show enough of the detail needed to more carefully prove 
each of the multiple sum identities cited in 
\eqref{eqn_GenFactFnSeqs_MultipleSum_Idents_exps-stmts_v1} 
starting from the first statements provided in 
\eqref{eqn_pnAlphaR_seqs_finite_sum_reps_modulop-stmts_v0}: 
\begin{align*} 
p_k(\alpha, \beta n + \gamma + \rho) 
     & = 
     \alpha^k \cdot \Pochhammer{\frac{\beta n + \gamma + \rho}{\alpha}}{k} \\ 
     & = 
     \sum_{m=0}^{k} \gkpSI{m}{k} \alpha^{k-m} 
     \left(\beta n + \gamma + \rho\right)^{m} \\ 
     & = 
     \sum_{p=0}^{k} \left( 
     \sum_{m=p}^{k} \gkpSI{k}{m} \binom{m}{p} \alpha^{k-m} \beta^{p} 
     \left(\gamma + \rho + \beta n_0\right)^{m-p} 
     \right) \times (n - n_0)^{p}. 
\end{align*} 
\end{proof} 
One immediate consequence of 
Proposition \ref{cor_GenFactFnSeqs_MultipleSummationIdents} 
phrases the form of the next multiple sums that 
exactly generate the single factorial functions, $(n-s)!$, 
modulo any prescribed integers $h \geq 2$. 
The simplified 
triple sum expansions of interest in 
Example \ref{example_TripleSumIdents_App_to_WThm} below 
correspond to a straightforward 
simplification of the more general multiple finite quintuple $5$-sums and 
$6$-sum identities  
that exactly enumerate the functions, $p_{n-s}(\alpha, \beta n + \gamma)$, 
when $(s, \alpha, \beta, \gamma) \defequals (1, -1, 1, 0)$. 
In particular, these results lead to the following 
finite, triple sum expansions of the single factorial function 
cases implicit to the statements of both 
Wilson's theorem and Clement's theorem from the introduction and which are 
considered as examples in the next subsection 
\cite[\cf \S 7]{HARDYWRIGHTNUMT}: 
\begin{align*} 
\tagonce\label{eqn_SingleFactFn_TripleSum_Ident_exps-stmts_v1} 
(n-1)! & = 
     \sum_{p=0}^{n} \left( 
     \sum_{0 \leq t \leq k < n} 
     \binom{n}{n-1-k} \gkpSI{n-1-k}{p} \gkpSI{k}{k-t} (-1)^{n-1-p} 
     \right) \times 
     (n-1)^{p} \\ 
     & = 
     \sum_{p=0}^{n} \left( 
     \sum\limits_{\substack{0 \leq k < n \\ 0 \leq t \leq n-1-k}} 
     \binom{n}{k} \gkpSI{k}{p} \gkpSI{n-1-k}{n-1-k-t} (-1)^{n-1-p} 
     \right) \times 
     (n-1)^{p} \\ 
     & = 
     \sum_{p=0}^{n} \left( 
     \sum_{0 \leq t \leq k < n} 
     \binom{n}{n-1-k} \gkpSI{n-1-k}{n-p} \gkpSI{k}{k-t} (-1)^{p+1} 
     \right) \times 
     (n-1)^{n-p}. 
\end{align*} 
The third exact triple sum identity given in 
\eqref{eqn_SingleFactFn_TripleSum_Ident_exps-stmts_v1} is further 
expanded through the formula of Riordan cited in the references 
as follows \cite[p.\ 173]{ADVCOMB} 
\cite[\cf Ex.\ 5.65, p.\ 534]{GKP}: 
\begin{align*} 
\tagtext{A Formula of Riordan} 
n^{n} & = \sum\limits_{0 \leq k < n} 
     \binom{n-1}{k} (k+1)! \times n^{n-1-k} = 
     \sum_{k=0}^{n-1} \binom{n-1}{k} (n-k)! \times n^{k}. 
\end{align*} 
A couple of the characteristic examples of these 
polynomial expansions in $n$ by the 
Stirling numbers of the first kind in 
\eqref{eqn_SingleFactFn_TripleSum_Ident_exps-stmts_v1} are are considered by 
Example \ref{example_TripleSumIdents_App_to_WThm} 
in the next section to illustrate the notable special cases of 
Wilson theorem and Clement's theorem 
modulo some as yet unspecified odd prime, $n \geq 3$. 

For comparison, the next several equations provide 
related forms of finite, double and triple sum identities for the 
double factorial function, $(2n-1)!!$. 
\begin{align*} 
\notag 
\tagtext{Double Factorial Triple Sums} 
(2n-1)!! 
   & = 
     \sum\limits_{1 \leq j \leq k \leq n} 
     \gkpSI{k-1}{j-1} 2^{n-j} (-1)^{n-k} \Pochhammer{1-n}{n-k} \\ 
   & = 
     \sum\limits_{\substack{1 \leq j \leq k \leq n \\ 0 \leq m \leq n-k}} 
     \gkpSI{k-1}{j-1} \gkpSI{n-k+1}{m+1} 2^{n-j} (-1)^{n-k-m} n^{m} \\ 
(2n-1)!!   
   & = 
     \sum\limits_{1 \leq j \leq k \leq n} 
     \binom{2n-k-1}{k-1} \gkpSI{k}{j} 
     (2n-2k-1)!! \\ 
   & = 
     \sum\limits_{\substack{1 \leq j \leq k \leq n \\ 0 \leq m \leq n-k}} 
     \binom{2n-k-1}{k-1} \gkpSI{k}{j} \gkpSI{n-k}{m} 2^{n-k-m} \\ 
   & = 
     \sum\limits_{\substack{1 \leq j \leq k \leq n \\ 0 \leq m \leq n-k}} 
     \binom{2n-k-1}{k-1} \gkpSI{k}{j} \FcfII{2}{n-k+1}{m+1} 
     (-1)^{n-k-m} \left(2n - 2k\right)^{m} 
\end{align*} 
The expansions of the double factorial function in the previous 
equations are obtained from the lemma in 
\eqref{eqn_GenFactFnSeqs_MultipleSum_Idents_exps-stmts_v1} 
applied to the known double sum identities involving the 
Stirling numbers of the first kind 
documented in the reference 
\cite[\S 6]{DBLFACTFN-COMBIDENTS-SURVEY}. 

\subsubsection{Expansions of parameterized congruences 
               involving the single factorial function} 
\label{subsubSection_TripleSumIdents_App_to_WThm} 
\label{subsubSection_NewIdentsFromFiniteDiffEqns_ExpsOfPrime-RelatedCongr} 

\begin{definition} 
\label{def_v2} 
We define the next parameterized congruence variants, 
denoted by $F_{\omega,n}(\xp, \xt, \xk)$, 
corresponding to the first triple sum identity expanded in 
\eqref{eqn_SingleFactFn_TripleSum_Ident_exps-stmts_v1} 
for some application-dependent, prescribed functions, 
$N_{\omega,p}(n)$ and $M_{\omega}(n)$, and where the formal variables 
$\{\xp, \xt, \xk\}$, index the terms in each 
individual sum over the respective variables, $p$, $t$, and $k$. 
\begin{align} 
\label{eqn_FwnXpXtXk_RHS_CongruenceFn_def-stmt_v1} 
F_{\omega,n}(\xp, \xt, \xk) \defequals 
     \sum\limits_{\substack{0 \leq t \leq k < n \\ 0 \leq p \leq n}} & 
     \binom{n}{n-1-k} \gkpSI{n-1-k}{p} \gkpSI{k}{k-t} \times && \\ 
\notag 
     & \times 
     (-1)^{n-1-p} \times N_{\omega,p}(n) 
     \times \{\xp^p \xt^t \xk^k\} 
     && \pmod{M_{\omega}(n)} 
\end{align} 
Notice that 
when $N_{\omega,p}(n) \defequals (n-1)^{p}$, the function 
$F_{\omega,n}(1, 1, 1)$ exactly generates the 
single factorial function, $(n-1)!$, modulo any specific choice of the 
function, $M_{\omega}(n)$, depending on $n$. 
\end{definition} 

\begin{example}[Wilson's Theorem and Clement's Theorem on Twin Primes] 
\label{example_TripleSumIdents_App_to_WThm} 
\label{example_TripleSumIdents_App_to_CThm} 
The next specialized forms of the parameters implicit to the 
congruence in \eqref{eqn_FwnXpXtXk_RHS_CongruenceFn_def-stmt_v1} 
of the previous definition are chosen as follows to form another 
restatement of Wilson's theorem given immediately below: 
\begin{align*} 
\tagtext{Wilson Parameter Definitions} 
\left(\omega, N_{\omega,p}(n), M_{\omega}(n)\right) \defmapsto 
     \left(\Wilson, (-1)^{p}, n\right). 
\end{align*} 
Then we see that 
\begin{align*} 
\tagtext{Wilson's Theorem} 
\text{ $n \geq 2$ prime } \iff 
     F_{\Wilson,n}(1, 1, 1) \equiv -1 \pmod{M_{\Wilson}(n)}. 
\end{align*} 
The special case of these parameterized expansions of the 
congruence variants defined by 
\eqref{eqn_FwnXpXtXk_RHS_CongruenceFn_def-stmt_v1} 
corresponding to the classical congruence-based 
characterization of the \emph{twin primes} (\seqnum{A001359}, \seqnum{A001097}) 
formulated in the statement of Clement's theorem is of 
particular interest in continuing the discussion from 
Section \ref{subSection_Intro_Examples}. 
When $k \defequals 2$ in the first congruence result given by 
\eqref{eqn_CongruencesForPowsOfN_ModDblTripleIntProducts} 
of Lemma \ref{lemma_CongPowModuloDblTripleProds} below, the 
parameters in \eqref{eqn_FwnXpXtXk_RHS_CongruenceFn_def-stmt_v1} 
are formed as the particular expansions 
\begin{align*} 
\tagtext{Clement Parameters} 
\mathsmaller{ 
\left(\omega, N_{\omega,p}(n), M_{\omega}(n)\right) \defmapsto 
     \left(\Clement, \frac{(-1)^{p}}{2}\left(2 + (1- 3^{p}) \cdot n\right), 
     n(n+2)\right). 
} 
\end{align*} 
The corresponding expansion of this alternate formulation of 
Clement's theorem initially stated as in 
Section \ref{subSection_Wthm_CThm_SpCase_Apps} 
of the introduction then results in the restatement of this 
result given in following form \cite[\S 4.3]{PRIMEREC}: 
\begin{align*} 
\tagtext{Clement's Theorem} 
\text{ $n, n+2$ prime } \iff 
     4 \cdot F_{\Clement,n}(1, 1, 1) + 4 + n\equiv 0 
     \pmod{M_{\Clement}(n)}. 
\end{align*} 
\noindent 
\textbf{\small{Conjectures from the formal polynomial computations in the 
               summary notebook}:} \\ 
Numerical computations with \Mm's 
\texttt{PolynomialMod} function suggest several noteworthy properties 
satisfied by the trivariate 
polynomial sequences, $F_{\Wilson,n}(\xp, \xt, \xk)$, 
defined by \eqref{eqn_FwnXpXtXk_RHS_CongruenceFn_def-stmt_v1} 
when $n$ is prime, particularly as formed in the 
cases taken over the following polynomial configurations of the 
three formal variables, $\xp$, $\xt$, and $\xk$: 
\[ 
\label{footnote_WThm_MathematicaPolyMod_NotedProperties} 
(\xp, \xt, \xk) \in \left\{(x, 1, 1), (1, x, 1), (1, 1, x)\right\}. 
\] 
     In particular, these computations suggest the following 
     properties satisfied by these sums for integers $n \geq 2$ 
     where the coefficients of the functions, 
     $F_{\Wilson,n}(\xp, \xt, \xk)$ and $F_{\Clement,n}(\xp, \xt, \xk)$, 
     are computed termwise with respect to the formal variables, 
     $\{\xp, \xt, \xk\}$, modulo each 
     $M_{\Wilson}(n) \defmapsto n$ and $M_{\Clement}(n) \defmapsto n(n+2)$: 
     \begin{itemize} 
     \item[\bf (1)] 
     $F_{\Wilson,n}(\xp, 1, 1) \equiv n-1 \pmod{n}$ when $n$ is prime 
     where $\deg_{\xp} \left\{ F_{\Wilson,n}(\xp, 1, 1) \pmod{n} \right\} > 0$ 
     when $n$ is composite; 
     \item[\bf (2)] 
     $F_{\Wilson,n}(1, 1, \xk) \equiv 
      (n-1) \cdot \xk^{n-1} \Iverson{\text{$n$ prime}} \pmod{n}$; and 
     \item[\bf (3)] 
     $F_{\Wilson,n}(1, \xt, 1) \equiv \sum_{i=0}^{n-2} \xt^{i} \pmod{n}$ 
     when $n$ is prime, and where \\ 
     $\deg_{\xt} \left\{ F_{\Wilson,n}(1, 1, \xt) \pmod{n} \right\} < n-2$ 
     when $n$ is composite. 
     \item[\bf (4)] 
     For fixed $0 \leq p < n$, the outer sums in the 
     definition of \eqref{eqn_FwnXpXtXk_RHS_CongruenceFn_def-stmt_v1}, 
     each implicitly indexed by powers of the 
     formal variable $\xp$ in the 
     parameterized congruence expansions defined above, 
     yield the Stirling number terms given by the coefficients 
     \[ 
     [\xp^{p}] F_{\omega,n}(\xp, 1, 1) = 
      N_{\omega,p}(n) \times (-1)^{n-1} (p+1) \gkpSI{n}{p+1}. 
     \] 
     Moreover, for any fixed lower index, $p+1 \geq 1$, the 
     Stirling number terms resulting from these sums are 
     related to factorial multiples of the $r$-order harmonic number 
     sequences expanded by the properties stated in 
     Section \ref{subsubSection_remark_SNum_R-OrderHNum_SeqExpIdents_spcases_v1} below 
     when $r \in \mathbb{Z}^{+}$ \cite[\cf \S 4.3]{MULTIFACTJIS}. 
     \item[\bf (5)] 
     \label{footnote_CThm_MathematicaPolyMod_NotedProperties}    
     One other noteworthy property computationally verified for the sums, 
     $F_{\Clement,n}(\xp, \xt, \xk)$, 
     modulo each prescribed $M_{\Clement}(n) \defequals n(n+2)$ 
     for the first several cases of the integers $n \geq 3$, 
     suggests that whenever $n$ is prime and $n+2$ is composite we have that 
     \begin{align*} 
     F_{\Clement,n}(1, 1, \xk) & \equiv n+4 + (n^2-4) \xk^{n-1} \pmod{n(n+2)}, 
     \end{align*} 
     where 
     $\deg_{\xk}\left\{ F_{\Clement,n}(1, 1, \xk) \pmod{n(n+2)} \right\} > 0$ 
     when $n$ is prime. 
     \end{itemize} 
     The computations in the attached summary notebook file 
     provide several specific examples of the properties 
     suggested by these configurations of the 
     special congruence polynomials for these cases 
     \cite{SUMMARYNBREF-STUB}. 
     See the summary notebook reference \cite{SUMMARYNBREF-STUB} 
     for more detailed computations of these, and other, formal polynomial 
     congruence properties. 
\end{example} 
There are numerous additional examples of prime-related congruences 
that are also easily adapted by extending the procedure 
for the classical cases given above. 
A couple of related approaches to congruence-based primality 
conditions for prime pairs formulated through the triple sum expansions 
phrased in 
Example \ref{example_TripleSumIdents_App_to_WThm} 
above are provided by the applications 
given in the next examples of the prime-related tuples highlighted in 
Section \ref{subsubSection_Examples-remarks_RelatedCongruences}. 

\begin{lemma}[Congruences for Powers Modulo Double and Triple Integer Products] 
\label{lemma_CongPowModuloDblTripleProds} 
For integers $p \geq 0$, $n \geq 1$, and any fixed $j > k \geq 1$, the 
following congruence properties hold: 
\StartGroupingSubEquations 
\label{eqn_CongruencesForPowsOfN_ModDblTripleIntProducts} 
\begin{align} 
(n-1)^{p} 
     & \equiv 
     \frac{(-1)^{p}}{k}\left(k + (1- (k+1)^{p}) \cdot n\right) 
     && \pmod{n(n+k)} \\ 
(n-1)^{p}     
     & \equiv 
     \mathsmaller{ 
     (-1)^{p} \left( 
     \frac{(n+k)(n+j)}{jk} + 
     \frac{n(n+j) (k+1)^{p}}{k(k-j)} + 
     \frac{n(n+k) (j+1)^{p}}{j(j-k)} 
     \right) 
     } 
     && \pmod{n(n+k)(n+j)}. 
\end{align} 
\EndGroupingSubEquations 
\end{lemma}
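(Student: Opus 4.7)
The plan is to recognize each right-hand side as the explicit remainder when the polynomial $(n-1)^p$ is divided by the monic integer polynomial $D(n) \defequals n(n+k)$ (part (a)) or $D(n) \defequals n(n+k)(n+j)$ (part (b)). Since $(n-1)^p \in \mathbb{Z}[n]$ and $D(n)$ is monic with integer coefficients, polynomial division in $\mathbb{Z}[n]$ produces a unique pair $Q(n), R(n) \in \mathbb{Z}[n]$ with $\deg R < \deg D$ such that $(n-1)^p = Q(n) D(n) + R(n)$. Evaluating this polynomial identity at any positive integer $n$ immediately yields the integer congruence $(n-1)^p \equiv R(n) \pmod{D(n)}$. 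Hence the whole proof reduces to computing $R(n)$ explicitly and verifying it matches the formulas in the statement.

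To identify $R(n)$, I would use the fact that it is the unique polynomial of degree less than $\deg D$ that interpolates $(n-1)^p$ at the roots of $D$. For part (a) these roots are $n = 0$ and $n = -k$, with prescribed values $(-1)^p$ and $(-k-1)^p = (-1)^p(k+1)^p$. Solving $R(n) = A + Bn$ with $R(0) = (-1)^p$ and $R(-k) = (-1)^p(k+1)^p$ gives $A = (-1)^p$ and $B = (-1)^p(1-(k+1)^p)/k$, which combine to
\[
R(n) = \frac{(-1)^p}{k}\bigl(k + (1-(k+1)^p)n\bigr),
\]
matching the claimed formula. For part (b) I would apply Lagrange interpolation at the three roots $n = 0, -k, -j$ with values $(-1)^p$, $(-1)^p(k+1)^p$, and $(-1)^p(j+1)^p$, which directly produces
\[
R(n) = (-1)^p\!\left(\tfrac{(n+k)(n+j)}{jk} + \tfrac{(k+1)^p n(n+j)}{(-k)(j-k)} + \tfrac{(j+1)^p n(n+k)}{(-j)(k-j)}\right),
\]
and the sign simplifications $1/((-k)(j-k)) = 1/(k(k-j))$ and $1/((-j)(k-j)) = 1/(j(j-k))$ recover exactly the expression in the lemma.

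The main conceptual point that needs care is the passage from a polynomial identity with a priori rational-looking coefficients in Lagrange form to an integer congruence. The cleanest justification is the monic-divisor observation above: since $D(n)$ is monic, the quotient and remainder of any integer polynomial by $D(n)$ automatically lie in $\mathbb{Z}[n]$, so the seemingly fractional denominators $k$, $jk$, $k(k-j)$, $j(j-k)$ must cancel against the numerators once terms are combined. One can double-check this directly using $k \mid (k+1)^p - 1$ (geometric series), which ensures integrality of the coefficient of $n$ in part (a); the analogous cancellations in part (b) follow from the same telescoping applied to the pairwise differences among the roots $0, -k, -j$. With $R(n)$ identified and its integrality guaranteed, evaluating the polynomial identity at integer $n$ completes the proof of both congruences.
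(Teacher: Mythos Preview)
Your proof is correct and takes a genuinely different route from the paper. The paper expands $(n-1)^p$ by iterated applications of the binomial theorem, successively rewriting powers of $n$ in terms of $n$, $(n+k)$, and $(n+j)$ so that most summands visibly carry a factor of $n(n+k)$ or $n(n+k)(n+j)$; the remainder is then read off as the surviving terms. Your argument instead appeals to polynomial division by the monic polynomial $D(n)$ and identifies the remainder via Lagrange interpolation at the roots $0,-k$ (and $-j$). This is cleaner and more conceptual: it explains in one stroke why the answer has degree $<\deg D$, why the displayed fractions collapse to integer coefficients (monic divisor forces $R\in\mathbb{Z}[n]$), and it generalizes immediately to any product of distinct linear factors. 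The paper's approach, by contrast, makes the cancellation of the higher-order terms explicit and requires no appeal to interpolation, at the cost of a somewhat heavier triple-sum computation.
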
 
\begin{proof} 
First, notice that a na\"{\i}ve expansion by repeated appeals to the 
binomial theorem yields the following exact expansions of the 
fixed powers of $(n-1)^{p}$: 
{\smaller 
     \begin{align*} 
     (n-1)^{p} & = 
          (-1)^{p} + 
          \mathsmaller{ 
          \sum\limits_{s=1}^{p} \binom{p}{s} \binom{s-1}{0} 
          n \cdot (-1)^{p-s} \cdot (-k)^{s-1} + 
          \sum\limits_{s=1}^{p} 
          \undersetbrace{\equiv 0 \pmod{n(n+k)}}{ 
          \binom{p}{s} \binom{s-1}{1} 
          n \cdot (n+k) \times (-1)^{p-s} (-k)^{s-2} 
          } 
          } \\ 
          & + 
          \mathsmaller{ 
          \sum\limits_{s=1}^{p} \sum\limits_{r=2}^{s-1} 
          \undersetbrace{\equiv 0 \pmod{n(n+k)}}{ 
          \binom{p}{s} \binom{s-1}{r} \binom{r-1}{0} 
          n \cdot (n+k) \times 
          (-1)^{p-s} (-k)^{s-1-r} (k-j)^{r-1} 
          } 
          } \\ 
          & + 
          \mathsmaller{ 
          \sum\limits_{s=1}^{p} \sum\limits_{r=2}^{s-1} \sum\limits_{t=1}^{r-1} 
          \undersetbrace{\equiv 0 \pmod{n(n+k),n(n+k)(n+j)}}{ 
          \binom{p}{s} \binom{s-1}{r} \binom{r-1}{t} 
          n \cdot (n+k) \cdot (n+j) \times 
          (-1)^{p-s} (-k)^{s-1-r} (k-j)^{r-1-t} (n+j)^{t-1} 
          } 
          }, 
     \end{align*}}
Each of the stated congruences are then easily obtained by summing the 
non-trivial remainder terms modulo the cases of the 
integer double products, $n(n+k)$, and the 
triple products, $n(n+k)(n+j)$, respectively. 
\end{proof} 

\begin{example}[Prime Triples and Sexy Prime Triplets] 
\label{example_FirstSPT_Result} 
A special case of the generalized congruences results for 
prime $k$-tuples obtained by 
induction from Wilson's theorem in the 
supplementary reference results \cite{SUMMARYNBREF-STUB} 
implies the next statement 
characterizing odd integer triplets, or $3$-tuples, of the form 
$(n, n+d_2, n+d_3)$, for some $n \geq 3$ and some 
prescribed, application-specific choices of the even integer-valued 
parameters, $d_3 > d_2 \geq 2$. 
\begin{align*} 
\tagtext{Wilson's Theorem for Prime Triples} 
(n, n+d_2, & n+d_3) \in \mathbb{P}^{3} \iff && \\ 
     & 
     (1 + (n-1)!)(1 + (n+d_2-1)!)(1+(n+d_3-1)!) \equiv 0 
     && \hspace{-4mm} \pmod{n(n+d_2)(n+d_3)} 
\end{align*} 
A partial characterization of the 
\emph{sexy prime triplets}, or prime-valued 
odd integer triples of the form, $(n, n+6, n+12)$, 
defined by convention so that $n+18$ is composite, then occurs 
whenever (\seqnum{A046118}, \seqnum{A046124}) 
\begin{align*} 
\mathsmaller{ 
     P_{\SPTriple,1}(n) (n-1)! + P_{\SPTriple,2}(n) (n-1)!^{2} + 
     P_{\SPTriple,3}(n) (n-1)!^{3} 
} & \equiv -1 \pmod{n(n+6)(n+12)}, 
\end{align*} 
where the three polynomials, $P_{\SPTriple,i}(n)$ for 
$i \defequals 1,2,3$, in the 
previous equation are expanded by the definitions given in the 
following equations: 
\begin{align*} 
P_{\SPTriple,1}(n) & \defequals 
     1 + \Pochhammer{n}{6} + \Pochhammer{n}{12} \\ 
P_{\SPTriple,2}(n) & \defequals 
     \Pochhammer{n}{6} + \Pochhammer{n}{12} + 
     \Pochhammer{n}{6} \times \Pochhammer{n}{12} \\ 
P_{\SPTriple,3}(n) & \defequals 
     \Pochhammer{n}{6} \times \Pochhammer{n}{12}. 
\end{align*} 
Let the congruence parameters in 
\eqref{eqn_FwnXpXtXk_RHS_CongruenceFn_def-stmt_v1} 
corresponding to the sexy prime triplet congruence expansions of the 
single factorial function powers from the previous equations be 
defined as follows: 
\begin{align*} 
\tagtext{Sexy Prime Triplet Congruence Parameters} 
 & \left(\omega, N_{\omega,p}(n), M_{\omega}(n)\right) \\ 
     & \phantom{\qquad} \defmapsto 
     \mathsmaller{ 
     \left(
     \SPTriple, \frac{(-1)^{p}}{72}\left( 
     (n+6)(n+12) - 2 n(n+12) \cdot 7^{p} + n(n+6) \cdot 13^{p} 
     \right), n(n+6)(n+12) 
     \right)
     }. 
\end{align*} 
We similarly see that the elements of an odd integer triple of the 
form $(n, n+6, n+12)$, are all prime whenever $n \geq 3$ satisfies the 
next divisibility requirement modulo the integer triple products, 
$n(n+6)(n+12)$. 
\begin{align*} 
\tagtext{Sexy Prime Triplets} 
\sum\limits_{1 \leq i \leq 3} P_{\SPTriple,i}(n) \times 
     {F_{\SPTriple,n}(1, 1, 1)}^{i} & \equiv -1 
     \pmod{M_{\SPTriple}(n)} 
\end{align*} 
The other notable special case triples of interest in the 
print references, and 
in the additional polynomial congruence cases computed in the 
supplementary reference data \cite{SUMMARYNBREF-STUB}, 
include applications to the 
prime $3$-tuples of the forms $(p+d_1, p+d_2, p+d_3)$ for 
$(d_1, d_2, d_3) \in \left\{(0,2,6), (0,4,6)\right\}$ 
(\cite[\cf \S 1.4]{HARDYWRIGHTNUMT}, \cite[\S 4.4]{PRIMEREC}, 
\seqnum{A022004}, \seqnum{A022005}). 
\end{example} 

\subsubsection{Remarks on expansions of the Stirling number triangles by 
               the $r$-order harmonic numbers} 
\label{subsubSection_remark_SNum_R-OrderHNum_SeqExpIdents_spcases_v1} 

The divisibility of the Stirling numbers of the first kind in 
\eqref{eqn_GenFactFnSeqs_MultipleSum_Idents_exps-stmts_v1} and in 
\eqref{eqn_SingleFactFn_TripleSum_Ident_exps-stmts_v1} 
is tied to well-known expansions of the triangle involving the 
generalized \emph{$r$-order harmonic numbers}, 
$H_n^{(r)} \defequals \sum_{k=1}^{n} k^{-r}$, 
for integer-order $r \geq 1$ \cite[\S 6]{GKP} 
\cite[\cf \S 5.7]{ADVCOMB} \cite[\cf \S 7-8]{HARDYWRIGHTNUMT}. 
The applications cited in the references 
provide statements of the following established 
special case identities for these coefficients 
\cite[\S 4.3]{MULTIFACTJIS} \cite[\S 6.3]{GKP}  
(\seqnum{A001008}, \seqnum{A002805}, \seqnum{A007406}, \seqnum{A007407}, 
\seqnum{A007408}, \seqnum{A007409}): 
\begin{align*} 
\tagtext{Harmonic Number Expansions of the Stirling Numbers} 
\gkpSI{n+1}{2} & = n! \cdot H_n \\ 
\gkpSI{n+1}{3} & = \frac{n!}{2}\left(H_n^2 - H_n^{(2)}\right) \\ 
\gkpSI{n+1}{4} & = 
     \frac{n!}{6}\left(H_n^3 - 3 H_n H_n^{(2)} + 2 H_n^{(3)}\right) \\ 
\tagonce\label{eqn_S1k234_HNum_exp_idents-restmts_v1} 
\gkpSI{n+1}{5} & = 
     \frac{n!}{24}\left( 
     H_n^4 - 6 H_n^{2} H_n^{(2)} + 
     3 \left(H_n^{(2)}\right)^{2} + 8 H_n H_n^{(3)} - 6 H_n^{(4)}\right). 
\end{align*} 
The reference \cite[p.\ 554, Ex.\ 6.51]{GKP} 
gives a related precise statement of the 
necessary condition on the primality of odd integers $p > 3$ 
implied by Wolstenholme's theorem in the following form 
\cite[\cf \S 7.8]{HARDYWRIGHTNUMT}: 
\begin{align*} 
\tagtext{Stirling Number Variant of Wolstenholme's Theorem} 
p > 3 \text{ prime } & \implies \\ 
     & \phantom{\quad} 
     p^2 \mid \mathsmaller{\gkpSI{p}{2}},\ 
     p^2 \mid \mathsmaller{p \gkpSI{p}{3} - p^{2} \gkpSI{p}{4} + 
                           \cdots + p^{p-2} \gkpSI{p}{p}}. 
\end{align*} 
The expansions given in the next remarks of 
Section \ref{subsubSection_MoreGeneralExps_congruences_multiple_factfns} 
suggest similar expansions of congruences involving the 
$\alpha$-factorial functions through more general cases 
$r$-order harmonic number sequences, such as the sequence variants, 
$H_{n,\alpha}^{(r)}$, defined in the next subsection of the article. 

\subsubsection{More general expansions of the new congruence results by 
               multiple factorial functions and 
               generalized harmonic number sequences} 
\label{subsubSection_MoreGeneralExps_congruences_multiple_factfns} 

The noted relations of the divisibility of the 
Stirling numbers of the first kind to the 
$r$-order harmonic number sequences expanded by the special cases from 
\eqref{eqn_S1k234_HNum_exp_idents-restmts_v1} 
are generalized to the $\alpha$-factorial function coefficient 
cases through the following forms of the 
exponential generating functions given in the reference 
\cite{MULTIFACT-CFRACS} \cite[\cf \S 3.3]{MULTIFACTJIS}: 
\begin{align*} 
\tagonce\label{eqn_FcfIIAlphanp1mp1_two-variable_EGFwz} 
\sum_{n \geq 0} \FcfII{\alpha}{n+1}{m+1} \frac{z^n}{n!} & = 
          \frac{(1- \alpha z)^{-1 / \alpha}}{m! \cdot \alpha^{m}} \times 
          \Log\left(\frac{1}{1-\alpha z}\right)^{m}. 
\end{align*} 
The special cases of these coefficients generated by the 
previous equation when $m \defequals 1,2$ are then expanded by the 
sums involving the $r$-order harmonic number sequences in the 
following equations: 
\begin{align*} 
\tagonce\label{eqn_FcfAlphaGenCoeffs_HNumExpIdents-stmts_v1} 
\FcfII{\alpha}{n+1}{2} \frac{1}{n!} & = 
     \alpha^{n-1} \times \sum_{k=0}^{n} 
     \binom{1-\frac{1}{\alpha}}{k} (-1)^{k} H_{n-k} \\ 
\FcfII{\alpha}{n+1}{3} \frac{1}{n!} & = 
     \frac{\alpha^{n-2}}{2} \times \sum_{k=0}^{n} 
     \binom{1-\frac{1}{\alpha}}{k} (-1)^{k} \left( 
     H_{n-k}^2 - H_{n-k}^{(2)} 
     \right). 
\end{align*} 
Identities providing expansions of the 
generalized $\alpha$-factorial triangles from the reference \cite{MULTIFACTJIS} 
at other specific cases of the lower indices $m \geq 3$ 
that involve the slightly generalized cases of the 
ordinary $r$-order harmonic number sequences, $H_{\alpha,n}^{(r)}$, 
defined by the next equation 
are expanded by related constructions. 
\begin{align*} 
\tagtext{Generalized Harmonic Number Definitions} 
H_{\alpha,n}^{(r)} & \defequals 
     \sum_{k=1}^{n} \frac{1}{(\alpha k+1-\alpha)^{r}},\ 
     n \geq 1, \alpha, r > 0 
\end{align*} 
For comparison with the Stirling number identities noted in 
\eqref{eqn_S1k234_HNum_exp_idents-restmts_v1} above, the 
first few cases of the coefficient identities in 
\eqref{eqn_FcfAlphaGenCoeffs_HNumExpIdents-stmts_v1} 
are expanded explicitly by these more general integer-order 
harmonic number sequence cases in the following equations: 
\begin{align*} 
\FcfII{\alpha}{n+1}{2} \frac{1}{n!} & = 
     \alpha^{n} \binom{n+\frac{1-\alpha}{\alpha}}{n} \times 
     H_{n,\alpha}^{(1)} \\ 
\FcfII{\alpha}{n+1}{3} \frac{1}{n!} & = 
     \frac{\alpha^{n}}{2} \binom{n+\frac{1-\alpha}{\alpha}}{n} \times \left( 
     \left(H_{n,\alpha}^{(1)}\right)^{2} - H_{n,\alpha}^{(2)} 
     \right) \\ 
\FcfII{\alpha}{n+1}{4} \frac{1}{n!} & = 
     \frac{\alpha^{n}}{6} \binom{n+\frac{1-\alpha}{\alpha}}{n} \times \left( 
     \left(H_{n,\alpha}^{(1)}\right)^{3} - 
     3 H_{n,\alpha}^{(1)} H_{n,\alpha}^{(2)} + 
     2 H_{n,\alpha}^{(3)} 
     \right). 
\end{align*} 
When $\alpha \defequals 2$, we have a relation between the 
sequences, $H_{n,\alpha}^{(r)}$, and the $r$-order harmonic numbers of the 
form $H_{2,n}^{(r)} = H_{2n}^{(r)} - 2^{-r} H_n^{(r)}$, which yields 
particular coefficient expansions for the double factorial 
functions involved in stating several of the 
congruence results from the examples given below. 
The expansions of the prime-related congruences involving the 
double factorial function cited in 
Section \ref{subsubSection-example_OtherRelatedCongruences_DblFactFns} 
above also suggest additional applications to finding 
integer congruence properties and necessary conditions 
involving these harmonic number sequences related to other more general 
forms of these expansions for prime pairs and prime-related subsequences 
(see Section 
\ref{subsubSection_remark_OtherApps_of_WThm_and_NewCongProps_to_PrimeSubseqs}). 

\subsection{Expansions of several new forms of 
            prime-related congruences and 
            other prime subsequence identities} 
\label{subsubSection_Examples-remarks_RelatedCongruences} 

\subsubsection{Statements of several new 
               results providing finite sum expansions of the 
               single factorial function modulo fixed integers} 

\label{remark_lemma_new_congruences_for_the_SgFactFn} 
The second cases of the generalized factorial function congruences in 
\eqref{eqn_pnAlphaR_seqs_finite_sum_reps_modulop-stmts_v1} 
are of particular utility in expanding several of the non-trivial 
results given in 
Section \ref{subsubSection_NewIdentsFromFiniteDiffEqns_ExpsOfPrime-RelatedCongr} 
below when $h - (n-s) \geq 1$. 
The results related to the double factorial functions and the 
central binomial coefficients expanded through the congruences in 
Section \ref{subsubSection-example_OtherRelatedCongruences_DblFactFns} 
employ the second cases of 
\eqref{eqn_pnAlphaR_seqs_finite_sum_reps_modulop-stmts_v1} and 
\eqref{eqn_pnAlphaR_seqs_finite_sum_reps_modulop-stmts_v2} stated in 
Proposition \ref{prop_ExactFormulas_CongruencesModh_from_FiniteDiffEqns}. 
The next few results stated in 
\eqref{eqn_lemma_Chn11_Chnm1nms_SgFactFnCongruences-exps_v1} and 
\eqref{eqn_lemma_Chn11_Chnm1nms_SgFactFnCongruences-exps_v2} are 
provided as lemmas needed to state many of the congruence results for the 
prime-related sequence cases given as examples in this section and in 
Section \ref{subsubSection-Examples_SomeResults_for_Prime_k-Tuples}. 

\begin{lemma}[Congruences for the Single Factorial Function] 
\label{lemma_lm} 
For natural numbers, $n,n-s \geq 0$, the single factorial function, $(n-s)!$, 
satisfies the following congruences 
whenever $h \geq 2$ is fixed (or when $h$ 
corresponds to some fixed function with an implicit dependence on the 
sequence index $n$): 
\StartGroupingSubEquations 
\label{eqn_lemma_Chn11_Chnm1nms_SgFactFnCongruences-exps_subeqns_ref} 
\begin{align*} 
\tagonce\label{eqn_lemma_Chn11_Chnm1nms_SgFactFnCongruences-exps_v1} 
(n-s)! 
     & \equiv 
     C_{h,n-s}(1, 1) && \pmod{h} \\ 
     & = 
     \sum_{i=0}^{n-s} \binom{h}{i} \Pochhammer{-h}{i} (n-s-i)! && \\ 
     & = 
     \sum_{i=0}^{n-s} \binom{h}{i}^{2} (-1)^{i} i! (n-s-i)! && \\ 
     & = 
     \sum_{i=0}^{n-s} \binom{h}{i} \binom{i-h-1}{i} i! (n-s-i)! && \\ 
\tagonce\label{eqn_lemma_Chn11_Chnm1nms_SgFactFnCongruences-exps_v2}  
(n-s)! 
     & \equiv 
     C_{h,n-s}(-1, n-s) && \pmod{h} \\ 
     & = 
     \sum_{i=0}^{n-s} \binom{h}{i} 
     \Pochhammer{n+1-s-h}{i} \times 
     (-1)^{n-s-i} \Pochhammer{-(n-s)}{n-s-i} && \\ 
     & = 
     \sum_{i=0}^{n-s} \binom{h}{i} \binom{n-s}{i} \binom{h-n+s-1}{i} 
     (-1)^{i} i! \times (n-s-i)!. && 
\end{align*} 
\EndGroupingSubEquations 
The right-hand-side terms, $C_{h,n-s}(\alpha, R)$, in the 
previous two equations correspond to the auxiliary convergent function 
sequences implicitly defined by 
\eqref{eqn_Vandermonde-like_PHSymb_exps_of_PhzCfs}, and the 
corresponding multiple sum expansions stated in 
\eqref{eqn_Chn_formula_stmts}, whose expansions are 
highlighted by the listings given in the tables from the reference 
\cite[\S 9]{MULTIFACT-CFRACS}. 
\end{lemma}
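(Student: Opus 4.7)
The plan is to reduce both halves of the lemma to direct applications of Proposition~\ref{prop_KeyProp}, and then to establish the three (respectively two) equivalent finite-sum representations through elementary rewrites of Pochhammer symbols and falling factorials. The crucial observation is that the single factorial admits two complementary product realizations,
\[
(n-s)! \;=\; p_{n-s}(1,1) \;=\; p_{n-s}(-1,\,n-s),
\]
corresponding to the ascending parametrization $(\alpha,R)=(1,1)$ and the descending parametrization $(\alpha,R)=(-1,n-s)$, respectively. Both lie in the cases $\alpha\in\{\pm 1,\pm 2\}$ for which \eqref{eqn_pnAlphaR_seqs_finite_sum_reps_modulop-stmts_v1} is rigorously proved.

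For the first congruence I would set $N\defequals n-s$ and apply \eqref{eqn_pnAlphaR_seqs_finite_sum_reps_modulop-stmts_v1} with $(\alpha,\beta,\gamma)=(1,0,1)$, which yields
\[
N!\;\equiv\;\sum_{k=0}^{N}\binom{h}{k}(-1)^{k}\,p_{k}(-1,h)\,(N-k)! \pmod{h}.
\]
Comparison with the defining sum \eqref{eqn_Chn_formula_stmt_v1} identifies the right-hand side as $C_{h,N}(1,1)$, which is the leading assertion of \eqref{eqn_lemma_Chn11_Chnm1nms_SgFactFnCongruences-exps_v1}. The three equivalent sum shapes then come from one-line rewrites: $p_{k}(-1,h)=\FFactII{h}{k}=h!/(h-k)!$; $\Pochhammer{-h}{k}=(-1)^{k}\FFactII{h}{k}$; $\binom{h}{k}\FFactII{h}{k}=\binom{h}{k}^{2}k!$; and $\binom{k-h-1}{k}\,k!=(-1)^{k}\FFactII{h}{k}$.

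For the second congruence I would apply \eqref{eqn_pnAlphaR_seqs_finite_sum_reps_modulop-stmts_v1} with $(\alpha,\beta,\gamma)=(-1,1,0)$ and index $N=n-s$, producing
\[
N!\;\equiv\;\sum_{k=0}^{N}\binom{h}{k}\,p_{k}(1,\,N+1-h)\,p_{N-k}(-1,N) \pmod{h}.
\]
Here $p_{k}(1,N+1-h)=\Pochhammer{N+1-h}{k}$ and $p_{N-k}(-1,N)=N!/k!=(-1)^{N-k}\Pochhammer{-N}{N-k}$, which is the first form stated in \eqref{eqn_lemma_Chn11_Chnm1nms_SgFactFnCongruences-exps_v2} after unwinding $N=n-s$. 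The second stated form follows from the rewrites $\Pochhammer{N+1-h}{k}=(-1)^{k}\binom{h-N-1}{k}\,k!$ and $N!/k!=\binom{N}{k}(N-k)!$.

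The main obstacle is bookkeeping rather than ideas: no step is deeper than Proposition~\ref{prop_KeyProp}, but the proliferation of algebraically equivalent Pochhammer and binomial presentations demands careful sign tracking, and one must interpret $\Pochhammer{n+1-s-h}{i}$ as a finite product which may vanish for $i>h-(n-s)$—precisely the mechanism that makes truncation of the outer sum at $i=n-s$ legitimate modulo $h$. A final sanity check worth recording is that applying \eqref{eqn_pnAlphaR_seqs_finite_sum_reps_modulop-stmts_v1} with $\beta=0$ is justified even though the proposition is phrased in terms of linear $R=\beta n+\gamma$, since its proof only uses the value of $R$ at the given index.
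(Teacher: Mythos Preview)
Your proposal is correct and follows essentially the same route as the paper: you identify $(n-s)! = p_{n-s}(1,1) = p_{n-s}(-1,n-s)$, invoke \eqref{eqn_pnAlphaR_seqs_finite_sum_reps_modulop-stmts_v1} from Proposition~\ref{prop_KeyProp} in each parametrization to obtain the congruences $C_{h,n-s}(1,1)$ and $C_{h,n-s}(-1,n-s)$, and then unwind the alternate sum forms via standard Pochhammer/binomial rewrites. The paper's proof is identical in structure but more terse, deferring the rewrite steps to \eqref{eqn_Vandermonde-like_PHSymb_exps_of_PhzCfs} and to Lemma~12 of \cite{MULTIFACT-CFRACS} rather than spelling them out as you do.
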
 
\begin{proof} 
Since $n! = \pn{n}{-1}{n}$ and $n! = \pn{n}{1}{1}$ for all $n \geq 1$, the 
identities in \eqref{eqn_pnAlphaR_seqs_finite_sum_reps_modulop-stmts_v1} of 
Proposition \ref{prop_ExactFormulas_CongruencesModh_from_FiniteDiffEqns} 
imply the pair of congruences stated in each of 
\eqref{eqn_lemma_Chn11_Chnm1nms_SgFactFnCongruences-exps_v1} and 
\eqref{eqn_lemma_Chn11_Chnm1nms_SgFactFnCongruences-exps_v2} 
modulo any fixed, prescribed setting of the integer-valued $h \geq 2$. 
The expansions of the remaining sums follow first from 
\eqref{eqn_Vandermonde-like_PHSymb_exps_of_PhzCfs}, and then from the 
results stated in Lemma $12$ from the reference \cite{MULTIFACT-CFRACS} 
applied to each of the expansions of these first two sums. 
\end{proof} 

\begin{cor}[Special Cases]  
If $n,n-s,d,an+r \in \mathbb{Z}^{+}$ are selected so that 
$n+d, an+r > n-s$, the coefficient identities for the sequences, 
$\pn{n-s}{1}{1} = [z^{n-s}] \ConvFP{n+d}{1}{1}{z} \pmod{n+d, an+r}$, stated in 
\eqref{eqn_Vandermonde-like_PHSymb_exps_of_PhzCfs-stmt_v1} and 
\eqref{eqn_Chn_formula_stmts} provide that 
\begin{align*} 
\tagonce\label{eqn_SingFactFn_nms_first_ChnSumExps_Modnpd_Modanpr-stmts_v1} 
(n-s)! 
     & \equiv \sum_{i=0}^{n-s} \binom{n+d}{i}^{2} (-1)^{i} i! \times 
     (n-s-i)! && \pmod{n+d} \\ 
(n-s)! 
     & \equiv \sum_{i=0}^{n-s} \binom{an+r}{i} 
     \Pochhammer{-(an+r)}{i} (n-s-i)! && \pmod{an+r}. 
\end{align*} 
\end{cor}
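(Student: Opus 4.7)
The plan is to derive both congruences as direct specializations of Lemma~\ref{lemma_lm}, exploiting the parenthetical in that lemma which explicitly permits $h$ to be a fixed function of the sequence index $n$. First I would take $h := n+d$ in \eqref{eqn_lemma_Chn11_Chnm1nms_SgFactFnCongruences-exps_v1}, so that
\[
(n-s)! \equiv C_{n+d,n-s}(1,1) \pmod{n+d};
\]
picking out the squared-binomial form $C_{h,n-s}(1,1) = \sum_{i=0}^{n-s} \binom{h}{i}^{2} (-1)^{i} i!\, (n-s-i)!$ from that same display and setting $h = n+d$ produces the first claimed identity at once.

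For the second congruence I would instead substitute $h := an+r$ into the Pochhammer-symbol form $C_{h,n-s}(1,1) = \sum_{i=0}^{n-s} \binom{h}{i} \Pochhammer{-h}{i} (n-s-i)!$ appearing on the first line of \eqref{eqn_lemma_Chn11_Chnm1nms_SgFactFnCongruences-exps_v1} (equivalently, the $(\alpha, R) = (1, 1)$ case of \eqref{eqn_Vandermonde-like_PHSymb_exps_of_PhzCfs-stmt_v1}). This yields exactly
\[
(n-s)! \equiv \sum_{i=0}^{n-s} \binom{an+r}{i} \Pochhammer{-(an+r)}{i} (n-s-i)! \pmod{an+r},
\]
matching the statement.

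The only step requiring genuine care is justifying that a formula originally established for a fixed modulus $h \geq 2$ remains valid when $h$ is replaced by an $n$-dependent expression such as $n+d$ or $an+r$. This is precisely where the hypotheses $n+d > n-s$ and $an+r > n-s$ enter: they ensure that the upper summation index $i = n-s$ stays strictly below the support threshold of the convergent-function numerator polynomial $\FP_h$, so that the closed-form Vandermonde-type expansion of $C_{h,n-s}$ in~\eqref{eqn_Vandermonde-like_PHSymb_exps_of_PhzCfs-stmt_v1} continues to describe the actual coefficient sum after specialization. No deeper obstacle arises, since $C_{h,k}(1,1)$ depends polynomially on $h$ and the lemma's congruence is an identity of polynomials in $h$ once both sides are read modulo any integer $h > k$ specialized from it.
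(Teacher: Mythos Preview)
Your proposal is correct and follows essentially the same route as the paper: specialize Lemma~\ref{lemma_lm} (specifically the forms of $C_{h,n-s}(1,1)$ in \eqref{eqn_lemma_Chn11_Chnm1nms_SgFactFnCongruences-exps_v1}) by substituting $h \mapsto n+d$ and $h \mapsto an+r$, respectively. If anything, your write-up is more explicit than the paper's about the role of the hypotheses $n+d, an+r > n-s$ in ensuring that the coefficient index $n-s$ stays within the support of $\FP_h$.
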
 
\begin{proof} 
The expansions of the congruences for the single factorial function 
provided by the lemmas stated in 
\eqref{eqn_lemma_Chn11_Chnm1nms_SgFactFnCongruences-exps_v1} 
follow as immediate consequences of the results in 
Proposition \ref{prop_ExactFormulas_CongruencesModh_from_FiniteDiffEqns}. 
The previous equations then correspond to the particular cases of these 
results when $h \defmapsto n+d$ and $h \defmapsto an+r$ 
respective order of the equations stated above. 
\end{proof} 
The results expanded through the symbolic computations with these sums 
obtained from \Mm{}'s \SigmaPkg package outlined in 
Section \ref{subsubSection_Examples-remarks_RelatedCongruences-MmCompsWith_the_SigmaPkg}
provide additional non-trivial forms of the prime-related congruences 
involving the single factorial function cases defined by the previous 
two results. 

\subsubsection{Examples: Consequences of Wilson's theorem} 
\label{subsubSection_Examples_ConsequencesOfWThm} 

\sublabel{Expansions of variants of Wilson's theorem} 
The previous identities lead to additional examples phrasing 
congruences equivalent to the primality condition in 
Wilson's theorem involving products of the 
single factorial functions, $n!$ and $(n+1)!$, 
modulo some odd integer $p \defequals 2n+1$ of unspecified primality to be 
determined by an application of these results. 
For example, we can prove that for $n \geq 1$, 
an odd integer $p \defequals 2n+1$ is prime if and only if
\cite[\cf \S 8.9]{HARDYWRIGHTNUMT}\footnote{ 
     The first equation restates a result proved by Sz\'{a}nt\'{o}  
     in 2005 given on the 
     \href{http://mathworld.wolfram.com/WilsonsTheorem.html}{
     \emph{MathWorld}} website. 
} 
\begin{align*} 
\tagonce\label{eqn_MathWorld_FormsOf_WilsonsThm} 
2^{1-n} \cdot n! \cdot (n+1)! & \equiv (-1)^{\binom{n+2}{2}} && \pmod{2n+1} \\ 
\left( n! \right)^{2} & \equiv (-1)^{n+1} && \pmod{2n+1}. 
\end{align*} 
The first congruence in 
\eqref{eqn_MathWorld_FormsOf_WilsonsThm} 
yields the following additional forms of 
necessary and sufficient conditions on the primality of the odd integers, 
$p \defequals 2n+1$, resulting from Wilson's theorem: 
\begin{align*} 
\tagonce\label{eqn_MathWorld_FormsOf_WilsonsThm-Chm_prodsum_exp-stmt_v2} 
\frac{1}{2^{n-1}} \times \left( 
     \prod\limits_{s \in \{0,1\}} \sum_{i=0}^{n+s} 
     \binom{2n+1}{i}^2 (-1)^{i} i! (n+s-i)! 
     \right) & \equiv (-1)^{(n+1)(n+2) / 2} && \pmod{2n+1} \\ 
\left(\sum_{i=0}^{n} \binom{2n+1}{i}^{2} (-1)^{i} i! (n-i)!\right)^{2} 
     & \equiv (-1)^{n+1} && \pmod{2n+1}. 
\end{align*} 
\sublabel{Congruences for primes of the form $n^2+1$} 
If we further seek to determine new properties of the odd 
primes of the form $p \defequals n^2 + 1 \geq 5$, 
obtained from adaptations of the new forms given by these sums, 
the second consequence of Wilson's theorem provided in 
\eqref{eqn_MathWorld_FormsOf_WilsonsThm} above 
leads to an analogous requirement expanded in the form of the 
next equations \cite[\S 3.4(D)]{PRIMEREC} (\seqnum{A002496}). 
\begin{align*} 
n^2 + 1 \text{ prime } & \iff 
     \mathsmaller{ 
     \left(\sum_{i=0}^{n^2 / 2} 
     \binom{n^2+1}{i} \Pochhammer{-(n^2+1)}{i} 
     \left(\mathsmaller{\frac{1}{2} (n^2 - 2i)}\right)! 
     \right)^{2} 
     } 
     && \equiv (-1)^{n^2 / 2 + 1} && \pmod{n^2+1} \\ 
\phantom{n^2 + 1 \text{ prime }} & \iff 
     \mathsmaller{ 
     \left(\sum_{i=0}^{n^2 / 2} 
     \binom{n^2+1}{i} \binom{i-n^2-2}{i} i! 
     \left(\mathsmaller{\frac{1}{2} (n^2 - 2i)}\right)! 
     \right)^{2} 
     } 
     && \equiv (-1)^{n^2 / 2 + 1} && \pmod{n^2+1} 
\end{align*} 
For comparison with the previous two congruences, the 
first classical statement of Wilson's theorem 
stated as in the introduction is paired 
with the next expansions of the fourth and fifth multiple sums stated in 
\eqref{eqn_Chn_formula_stmts} 
to show that an odd integer $p \geq 5$ of the form 
$p \defequals n^2+1$ is prime for some even $n \geq 2$ whenever 
\begin{align*} 
\sum\limits_{\substack{0 \leq m \leq k \leq n^2 \\ 
             0 \leq v \leq i \leq s \leq n^2} 
             } & 
     \underset{C_{h,k}(\alpha, R) \text{ where } 
          h \defmapsto n^2+1,\ k \defmapsto n^2,\ 
          \alpha \defmapsto -1,\ R \defmapsto n^2 
          \text{ in \eqref{eqn_Chn_formula_stmts-exp_v4} }}{\underline{ 
     \mathsmaller{
     \binom{n^2+1}{k} \binom{m}{s} \binom{i}{v} \binom{n^2+1+v}{v} 
     \gkpSI{k}{m} \gkpSII{s}{i} (-1)^{m+i-v} i! \Pochhammer{-n^2}{n^2-k} 
     \left(n^2 + 1\right)^{m-s}}}} 
     \equiv -1 \hspace{-3.5mm} \pmod{n^2+1} \\ 
\sum\limits_{\substack{0 \leq i \leq n^2 \\ 
                       0 \leq m \leq k \leq n^2 \\ 
                       0 \leq t \leq s \leq n^2} 
                 } & 
     \underset{C_{h,n}(\alpha, R) \text{ where } 
          h \defmapsto n^2+1,\  n \defmapsto n^2,\ 
          \alpha \defmapsto 1,\ R \defmapsto 1 
          \text{ in \eqref{eqn_Chn_formula_stmts-exp_v5} }}{\underline{ 
     \mathsmaller{
     \binom{n^2+1}{k} \binom{m}{t} \gkpSI{k}{m} 
     \gkpSI{n^2-k}{s-t} \gkpSII{s}{i} 
     (-1)^{m+s-i} n^{2m-2t} 
     \times i! 
     }} 
     } 
     \equiv -1 \pmod{n^2+1}. 
\end{align*} 
These congruences are straightforward to adapt to 
form related results characterizing subsequences of primes of the form 
$p \defequals an^2+bn+c$ for some fixed constants 
$a,b,c \in \mathbb{Z}$ satisfying the constraints given in the 
reference at natural numbers $n \geq 1$ 
\cite[\S 2.8]{HARDYWRIGHTNUMT}. 

\sublabel{Congruences for the Wilson primes} 
The sequence of \emph{Wilson primes} denotes the subsequence odd primes $n$ 
such that the \emph{Wilson quotient}, 
$\WilsonQuotient{n} \defequals 
 \frac{\left((n-1)! + 1\right)}{n}$, is divisible by $n$, 
or equivalently the sequence of odd integers $n \geq 3$ 
with the divisibility property of the single factorial 
function, $(n-1)!$, 
modulo $n^2$ defined in the next equation 
\cite[\S 5.4]{PRIMEREC} \cite[\S 6.6]{HARDYWRIGHTNUMT} 
(\seqnum{A007619}, \seqnum{A007540}). 
\begin{align*} 
\tagtext{Wilson Primes} 
\WilsonPrimeSet & \defequals \left\{ 
     n \geq 3 : n^2 | (n-1)! + 1 
     \right\} 
     \quad \seqmapsto{A007540} \left(5, 13, 567, \ldots \right). 
\end{align*} 
A few additional expansions of congruences characterizing the 
Wilson primes correspond to the imposing the following additional 
equivalent requirements on the divisibility of the 
single factorial function (modulo $n$) in Wilson's theorem: 
\begin{align*} 
\tagtext{Wilson Prime Congruences} 
\underset{C_{n^2,n-1}(1, 1)\ \equiv\ (n-1)! \pmod{n^2}}{
     \underbrace{ 
     \sum_{i=0}^{n-1} \binom{n^2}{i}^{2} (-1)^{i} i! (n-1-i)! 
     } 
} 
     & \equiv -1 && \pmod{n^2} \\ 
\underset{(n-1)! \pmod{n^2}}{
     \underbrace{ 
     \sum_{i=0}^{n-1} \binom{n^2}{i} \binom{i-n^2-1}{i} i! (n-1-i)! 
     } 
} 
     & \equiv -1 && \pmod{n^2} \\ 
\underset{C_{n^2,n-1}(-1, n-1)\ \equiv\ (n-1)! \pmod{n^2}}{
     \underbrace{ 
     \sum_{i=0}^{n-1} \binom{n^2}{i} 
     \FFactII{(n^2-n)}{i} \times (-1)^{n-1-i} \FFactII{(n-1)}{n-1-i} 
     } 
} 
     & \equiv -1 && \pmod{n^2}. 
\end{align*} 
The congruences in the previous equation 
are verified numerically in the reference \cite{SUMMARYNBREF-STUB} to 
hold for the first few hundred primes, $p_n$, 
only when $p_n \in \{5, 13, 563\}$. 
The third and fourth multiple sum expansions of the coefficients, 
$C_{n^2,n-1}(-1, n-1)$, given in 
\eqref{eqn_Chn_formula_stmts} 
similarly provide that an odd integer $n > 3$ is a 
Wilson prime if and only if either of the following pair of 
congruences holds modulo the integer squares $n^2$: 
\begin{align*} 
\undersetbrace{C_{n^2,n-1}(-1, n-1) 
     \text{ in \eqref{eqn_Chn_formula_stmts-exp_v3} }}{ 
     \mathsmaller{ 
     \sum\limits_{s=0}^{n-1} \sum\limits_{i=0}^{s} \left( 
     \sum\limits_{k=0}^{n-1} \sum\limits_{m=0}^{k} 
     \binom{n^2}{k} \binom{n^2}{i} \binom{m}{s} 
     \gkpSI{k}{m} \gkpSII{s}{i} (-1)^{n-1-k} 
     \Pochhammer{1-n}{n-1-k} (-n)^{m-s} i! 
     \right)} 
     } 
     & 
     \mathsmaller{ 
     \equiv -1 \pmod{n^2} 
     } \\ 
     \undersetbrace{C_{n^2,n-1}(-1, n-1) 
     \text{ in \eqref{eqn_Chn_formula_stmts-exp_v4} }}{ 
\mathsmaller{ 
     \sum\limits_{s=0}^{n-1} \sum\limits_{i=0}^{s} \sum\limits_{v=0}^{i} 
     \left( 
     \sum\limits_{k=0}^{n-1} \sum\limits_{m=0}^{k} 
     \binom{n^2}{k} \binom{m}{s} \binom{i}{v} \binom{n^2+v}{v} 
     \gkpSI{k}{m} \gkpSII{s}{i} (-1)^{i-v} 
     \FFactII{(n-1)}{n-1-k} (-n)^{m-s} i! 
     \right)} 
     } 
     & 
     \mathsmaller{ 
     \equiv -1 \pmod{n^2}. 
     } 
\end{align*} 

\sublabel{Congruences for special prime pair sequences} 
The constructions of the new results expanded above 
are combined with the known congruences established in the reference 
\cite[\S 3, \S 5]{ONWTHM-AND-POLIGNAC-CONJ} to obtain the 
alternate necessary and sufficient conditions for the 
twin prime pairs 
stated in \eqref{eqn_TwinPrime_NewExpsOfKnownCongruenceResults-stmts_v1} 
of the introduction (\seqnum{A001359}, \seqnum{A001097}). 
The results in the references also provide analogous 
expansions of the congruence statements 
corresponding to characterizations of the 
\emph{cousin prime} and \emph{sexy prime} pairs 
expanded in the following equations (\seqnum{A023200}, \seqnum{A023201}): 
\begin{align*} 
\tagtext{Cousin Prime Pairs} 
 & 2n+1, 2n+5 \text{ odd primes } && \\ 
 & \qquad \iff 
   \mathsmaller{ 
     36\left(\sum\limits_{i=0}^{n} 
     \binom{(2n+1)(2n+5)}{i}^2 (-1)^i i! (n-i)!\right)^2} & \\ 
 & \phantom{\qquad\iff} + 
   (-1)^{n} (29-14n) && \equiv 0 \pmod{(2n+1)(2n+5)} \\ 
 & \qquad \iff 
   \mathsmaller{
     96 \left(\sum\limits_{i=0}^{2n} 
     \binom{(2n+1)(2n+5)}{i}^2 (-1)^i i! (2n-i)!\right)} & \\ 
 & \phantom{\qquad\iff} + 
   46n+119 && \equiv 0 \pmod{(2n+1)(2n+5)} \\ 
 & 2n+1, 2n+7 \text{ odd primes } && \\ 
\tagtext{Sexy Prime Pairs} 
 & \qquad \iff 
   \mathsmaller{ 
     1350\left(\sum\limits_{i=0}^{n} 
     \binom{(2n+1)(2n+7)}{i}^2 (-1)^i i! (n-i)!\right)^2} & \\ 
 & \phantom{\qquad\iff} + 
   (-1)^{n} (578n+1639) && \equiv 0 \pmod{(2n+1)(2n+7)} \\ 
 & \qquad \iff 
   \mathsmaller{
     4320\left(\sum\limits_{i=0}^{2n} 
     \binom{(2n+1)(2n+7)}{i}^2 (-1)^i i! (2n-i)!\right)} & \\ 
\tagonce\label{eqn_CousinSexyPrimePairs_CongruenceStmts-exps_v1} 
 & \phantom{\qquad\iff} + 
   1438n+5039 && \equiv 0 \pmod{(2n+1)(2n+7)}. 
\end{align*} 
The multiple sum expansions of the single factorial functions in the 
congruences given in the previous two examples also yield 
similar restatements of the pair of congruences in 
\eqref{eqn_TwinPrime_NewExpsOfKnownCongruenceResults-stmts_v1} 
from Section \ref{subSection_Wthm_CThm_SpCase_Apps} 
providing that for some $n \geq 1$, the odd integers, 
$(p_1, p_2) \defequals (2n+1, 2n+3)$, are both prime 
whenever either of the following divisibility conditions hold: 
\begin{align*} 
2 & \times 
     \undersetbrace{C_{(2n+1)(2n+3),n}(-1,n) 
     \text{ in \eqref{eqn_Chn_formula_stmts-exp_v3} }}{\left(
     \mathsmaller{ 
     \sum\limits_{\substack{0 \leq i \leq s \leq n \\ 
                       0 \leq m \leq k \leq n}} 
     \binom{(2n+1)(2n+3)}{i} \binom{(2n+1)(2n+3)}{k} \binom{m}{s} 
     \gkpSI{k}{m} \gkpSII{s}{i} (-1)^{s+k} i! \times 
     \FFactII{n}{n-k} (n+1)^{m-s} 
     } 
     \right)}^{2}  \\ 
     & \phantom{\quad} + 
     (-1)^{n} (10n+7) \equiv 0 \hspace{2.6in} \pmod{(2n+1)(2n+3)} \\ 
4 & 
     \undersetbrace{C_{(2n+1)(2n+3),2n}(-1,2n) 
     \text{ in \eqref{eqn_Chn_formula_stmts-exp_v4} }}{ 
     \left( 
     \mathsmaller{ 
     \sum\limits_{\substack{0 \leq v \leq i \leq s \leq 2n \\ 
                         0 \leq m \leq k \leq 2n}} 
     \binom{(2n+1)(2n+3)}{k} \binom{(2n+1)(2n+3)+v}{v} 
     \binom{i}{v} \binom{m}{s} \gkpSI{k}{m} \gkpSII{s}{i} 
     (-1)^{s-i+v+k} i! \times 
     \FFactII{(2n)}{2n-k} (2n+1)^{m-s} 
     } 
     \right)}  \\ 
     & \phantom{\quad} + 
     (2n+5) \equiv 0 \hspace{3.1in} \pmod{(2n+1)(2n+3)}. 
\end{align*} 
The treatment of the integer congruence identities involved in 
these few notable example cases from 
Example \ref{example_TripleSumIdents_App_to_WThm}, in 
Example \ref{example_FirstSPT_Result}, and in the 
last several examples from the remarks above, 
is by no means exhaustive, but serves to demonstrate the utility 
of this approach in formulating several new forms of 
non-trivial prime number results 
with many notable applications. 

\subsubsection{Computations of symbolic sums with Mathematica's Sigma package} 
\label{subsubSection_Examples-remarks_RelatedCongruences-MmCompsWith_the_SigmaPkg}

\begin{example}[Expansions of the First Sum from Lemma \ref{lemma_lm}] 
The working summary notebook 
attached to the article \cite{SUMMARYNBREF-STUB} 
includes computations with \Mm's \SigmaPkg package 
that yield additional forms of the identities expanded in 
\eqref{eqn_SingFactFn_nms_first_ChnSumExps_Modnpd_Modanpr-stmts_v1}, 
\eqref{eqn_MathWorld_FormsOf_WilsonsThm}, and 
\eqref{eqn_MathWorld_FormsOf_WilsonsThm-Chm_prodsum_exp-stmt_v2}, 
for the single factorial function, $(n-s)!$, when $s \defequals 0$. 
For example, 
alternate variants of the identity for the first sum in 
\eqref{eqn_SingFactFn_nms_first_ChnSumExps_Modnpd_Modanpr-stmts_v1} are 
expanded as follows: 
\begin{align*} 
n! & \equiv 
     \sum_{i=0}^{n} \binom{n+d}{i} 
     \Pochhammer{-(n+d)}{i} (n-i)! && \pmod{n+d} \\ 
n! & \equiv 
     \sum_{i=0}^{n} \binom{n+d}{i} \binom{i-n-d-1}{i} i! (n-i)! 
     && \pmod{n+d} \\ 
n! & \equiv 
     \sum_{i=0}^{n} \binom{n+d}{i}^{2} (-1)^{i} i! (n-i)! && \pmod{n+d} \\ 
   & = 
     (-1)^{n} \Pochhammer{2d}{n} \times \left( 
     1 + d^2 \times \sum_{i=1}^{n} \binom{i+d}{i} 
     \frac{(-1)^{i} \Pochhammer{-(i+d)}{i}}{(i+d)^2 \Pochhammer{2d}{i}} 
     \right) && \\ 
\tagonce\label{eqn_SingFactFn_nms_first_ChnSumExps_Modnpd_Modanpr-stmts_v2} 
   & = 
   \undersetbrace{\defequals S_{1,d}(n)}{ 
   (-1)^{n} d^{2} \times \sum_{i=0}^{n} \binom{i+d}{d}^{2} \times 
   \frac{i! \cdot \Pochhammer{2d+i}{n-i}}{(i+d)^{2}}
   }. && 
\end{align*} 
The first special cases of the sums, $S_{1,d}(n)$, 
defined in the last equation 
are expanded in terms of the first-order harmonic numbers 
for integer-valued cases of $d \geq 1$ as follows: 
\begin{align*} 
S_{1,1}(n) & = 
     (-1)^{n} \Pochhammer{2}{n} \times H_{n+1} \\ 
     & = 
     (-1)^{n} (n+1)! \times H_{n+1} \\ 
S_{1,2}(n) & = 
     (-1)^{n} (n+2)! \times \left( 
     (n+3) H_{n+2} - 2(n+2) 
     \right) \\ 
     & = 
     \frac{3}{2} \times (-1)^{n} \Pochhammer{4}{n} \times \left( 
     H_{n} - \frac{(2n^3+8n^2+7n-1)}{(n+1)(n+2)(n+3)} 
     \right) \\ 
S_{1,3}(n) & = 
     \frac{1}{4} \times (-1)^{n} (n+4)! \times \left( 
     (n+5) H_{n+3} - 3(n+3) 
     \right) \\ 
     & = 
     \frac{10}{3} \times (-1)^{n} \Pochhammer{6}{n} \times \left( 
     H_{n} - \frac{(3n^4+24n^3+60n^2+46n-1)}{(n+1)(n+2)(n+3)(n+5)} 
     \right) \\ 
S_{1,4}(n) & = 
     \frac{1}{108} \times (-1)^{n} (n+4)! \times \left( 
     3(n+5)(n+6)(n+7) H_{n+4} - (n+4)(11n^2+118n+327)  
     \right) \\ 
     & = 
     \frac{35}{12} \times (-1)^{n} \Pochhammer{8}{n} \times \left( 
     \mathsmaller{ 
     3 H_{n} - 
     \frac{(11n^7+260n^6+2498n^5+12404n^4+33329n^3+45548n^2+24426n-108)}{ 
     (n+1)(n+2)(n+3)(n+4)(n+5)(n+6)(n+7)} 
     } 
     \right). 
\end{align*} 
More generally, we can provide a somewhat intricate proof omitted here of 
another expansion of the last sum in 
\eqref{eqn_SingFactFn_nms_first_ChnSumExps_Modnpd_Modanpr-stmts_v2} 
in terms of the first-order harmonic numbers given by 
(\seqnum{A001008}, \seqnum{A002805}) 
\begin{align*} 
n! & \equiv 
     (-1)^{n} \Pochhammer{2d}{n}\left( 1 + 
     \frac{d}{2} \binom{2d}{d} \sum_{1 \leq i \leq d} 
     \frac{(-1)^{d-i} (d+i-2)!}{(i-1)!^2 (d-i)!} \left[ 
     H_{n-1+d+i} - H_{d+i-1} 
     \right]\right) \hspace{-4mm} \pmod{n+d}. 
\end{align*} 
When the parameter $d \defmapsto d_n$ 
in the previous expansions of the sums, $S_{1,d}(n)$, 
depends linearly, or quadratically on $n$, the harmonic-number-based 
identities expanding these congruence forms yield the forms of the 
next examples considered in this subsection. 
\end{example} 

\begin{example}[Expansions of Sums with a Linear Dependence of $h$ on $n$] 
Further computation with \Mm's \SigmaPkg package 
similarly yields the following alternate form of the second sums in 
\eqref{eqn_SingFactFn_nms_first_ChnSumExps_Modnpd_Modanpr-stmts_v1} 
implicit to the congruence identities stated in 
\eqref{eqn_MathWorld_FormsOf_WilsonsThm} and 
\eqref{eqn_MathWorld_FormsOf_WilsonsThm-Chm_prodsum_exp-stmt_v2} above: 
\StartGroupingSubEquations 
\label{eqn_nFactMod2np1_CongruenceIdent_SigmaPkgAltSums} 
\begin{align} 
n! & \equiv 
     \mathsmaller{ 
     \sum\limits_{i=0}^{n} \binom{2n+1}{i}^2 (-1)^{i} 
     i! (n-i)! 
     } 
     \qquad \pmod{2n+1} \\ 
     & = 
     \mathsmaller{ 
     \frac{(-1)^{n} (3n+1)!}{8 \left(n!\right)^{2}} \times \left( 
     8 - \sum\limits_{i=1}^{n} \binom{2i+1}{i}^{2} 
     \frac{(i!)^{3}}{2 \cdot (3i+1)!} \left( 
     11 + \frac{20}{i} - \frac{8}{(2i+1)} + \frac{1}{(2i+1)^2} 
     \right) 
     \right) 
     } \\ 
\notag 
     & = 
     \mathsmaller{ 
     \frac{(-1)^{n} (3n+1)!}{\left(n!\right)^{2}} - 
     \frac{(-1)^{n}}{16} \times 
     \sum\limits_{i=1}^{n} \binom{2i+1}{i}^{2} 
     \frac{i! \cdot \Pochhammer{3i+2}{3n-3i}}{ 
     \Pochhammer{i+1}{n-i}^{2}} \left( 
     11 + \frac{20}{i} - \frac{8}{(2i+1)} + \frac{1}{(2i+1)^2} 
     \right) 
     } \\ 
     & = 
     \mathsmaller{ 
     \frac{(-1)^{n} (3n+1)!}{8 \left(n!\right)^{2}} \times \left( 
     8 - \sum\limits_{i=1}^{n} \binom{2i+1}{i}^{2} 
     \frac{(i!)^{3}}{(3i)!} \left( 
     \frac{10}{i} + \frac{5}{(2i+1)} - \frac{1}{(2i+1)^2} - 
     \frac{32}{(3i+1)} 
     \right) 
     \right) 
     } \\ 
\notag 
     & = 
     \mathsmaller{ 
     \frac{(-1)^{n} (3n+1)!}{\left(n!\right)^{2}} - 
     \frac{(3n+1) (-1)^{n}}{8} \times 
     \sum\limits_{i=1}^{n} \binom{2i+1}{i}^{2} 
     \frac{i! \cdot \Pochhammer{3i+1}{3n-3i}}{ 
     \Pochhammer{i+1}{n-i}^{2}} \left( 
     \frac{10}{i} + \frac{5}{(2i+1)} - \frac{1}{(2i+1)^2} - 
     \frac{32}{(3i+1)} 
     \right) 
     }. 
\end{align} 
\EndGroupingSubEquations 
The documentation for the \SigmaPkg package in the 
reference \cite[Ex.\ 3.3]{SYMB-SUM-COMB-SIGMAPKGDOCS} 
contains several identities 
related to the partial harmonic-number-related 
expansions of the single factorial function sums 
given in the next examples, and for the computations 
contained in the reference \cite{SUMMARYNBREF-STUB}. 
\end{example} 

\begin{example}[Expansions of Sums Involving a Quadratic Dependence of 
                $h$ on $n$] 
The second (non-square) sums implicit to the congruences providing 
characterizations of the twin prime pairs given in 
\eqref{eqn_TwinPrime_NewExpsOfKnownCongruenceResults-stmts_v1} 
of the introduction, and of the 
cousin and sexy primes expanded in 
\eqref{eqn_CousinSexyPrimePairs_CongruenceStmts-exps_v1} 
of the previous subsection, are easily generalized to form related results for 
other prime pairs (\seqnum{A023202}, \seqnum{A023203}, \seqnum{A046133}). 
In particular, 
for positive integers $d \geq 1$, the special cases of these 
expansions for the prime pair sequences considered above lead to 
more general congruence-based characterizations of the odd prime pairs, 
$(2n+1, 2n+1+2d)$, in the form of the following equation 
for some $a_d, b_d, c_d \in \mathbb{Z}$ and where the parameter 
$h_d \defequals (2n+1)(2n+1+2d) > 2n$ implicit to these sums 
depends quadratically on $n$ 
\cite[\cf \S 3, \S 5]{ONWTHM-AND-POLIGNAC-CONJ}: 
\begin{align*} 
\tagonce\label{eqn_GenPrimePairCongFn_hndabc_v1} 
     & 2n+1, 2n+1+2d \text{ prime } \\ 
     & \qquad \iff \quad 
     a_d \times 
     \underset{\mathlarger{ \defequals S_n(h_d) \equiv (2n)! 
     \pmod{h_d}}}{\underbrace{
     \sum_{i=0}^{2n} \binom{h_d}{i}^{2} (-1)^{i} i! (2n-i)!} 
     } + 
     b_d n + c_d \equiv 0 \pmod{h_d}. 
\end{align*} 
For natural numbers $h, i, n \geq 0$, 
let the shorthand for the functions, 
$T_{h,n}$ and $H_{h,i}$, be defined as in the next equations. 
\begin{align*} 
\tagonce\label{eqn_ShorthandFnNotation_Thk_Hhi-defs_exps_v1} 
S_n(h) & \defequals 
     \sum_{i=0}^{2n} \binom{h}{i}^{2} (-1)^{i} i! (2n-i)! \\ 
T_{h,n} & \defequals 
     \prod_{j=1}^{n} \left( 
     \frac{(h-2j)^2 (h+1-2j)^2}{2 \cdot (2h+1-2j) (h-j)} 
     \right) \\ 
     & \phantom{:} = 
     4^{n} \times 
     \frac{\Pochhammer{\frac{1-h}{2}}{n}^2 \Pochhammer{1-\frac{h}{2}}{n}^2}{ 
     \Pochhammer{\frac{1}{2}-h}{n} \Pochhammer{1-h}{n}} = 
     \frac{\Pochhammer{1-h}{2n}^2}{\Pochhammer{1-2h}{2n}} \\ 
H_{h,i} & \defequals 
     \frac{h (h+1) (2h-1)}{(h-1)(h-i)} + 
     \frac{2 (h+1)^2 (2h+1)}{h (2h+1-2i)} + 
     \frac{2 (h+1)}{h (h-1) (h+1-2i)} \\ 
     & \phantom{:} = 
     \frac{(h+1) (2h+1-4i) (h-2i)}{(2h+1-2i) (h+1-2i) (h-i)} 
\end{align*} 
Computations with the \SigmaPkg package 
yield the next alternate expansion of the first sum 
defined in \eqref{eqn_ShorthandFnNotation_Thk_Hhi-defs_exps_v1} 
given by 
\begin{align*} 
S_n(h) & = \sum_{i=0}^{n} \binom{h}{2i}^{2} (2i)! \times 
     \frac{T_{h,n}}{T_{h,i}} \times H_{h,i}, 
\end{align*} 
where the ratios of the product functions in the previous equation 
are simplified by the identities given in the reference 
\cite{WOLFRAMFNSSITE-INTRO-FACTBINOMS} as follows: 
\begin{align*} 
\tagonce\label{eqn_ThiThnFnRatioTerms_PHSymbol_SimplificationIdent} 
\frac{T_{h,n}}{T_{h,i}} & = 
     \frac{\Pochhammer{1-h}{2n}^2}{\Pochhammer{1-2h}{2n}} \times 
     \frac{\Pochhammer{1-2h}{2i}}{\Pochhammer{1-h}{2i}^2} = 
     \frac{\Pochhammer{1-h+2i}{2n-2i}^2}{\Pochhammer{1-2h+2i}{2n-2i}},\ 
     n \geq i. 
\end{align*} 
The forms of the generalized sums, $S_n(h)$, obtained 
from the special case identity above 
using the \SigmaPkg software package routines 
are then expanded by the harmonic-number-related sums over the 
originally fixed indeterminate parameter $h$ in the following forms: 
\begin{align*} 
S_n(h) & = 
     \mathsmaller{ 
     \sum\limits_{i=0}^{n} \binom{h}{2i}^{2} 
     \frac{(2i)! \times 
     \Pochhammer{1-h+2i}{2n-2i}^2}{\Pochhammer{1-2h+2i}{2n-2i}} 
     \times \left( 
     \frac{h (h+1) (2h-1)}{(h-1)(h-i)} + 
     \frac{2 (h+1)^2 (2h+1)}{h (2h+1-2i)} + 
     \frac{2 (h+1)}{h (h-1) (h+1-2i)} 
     \right) 
     } \\ 
     & = 
     \mathsmaller{ 
     \sum\limits_{i=0}^{n} \binom{h}{2n-2i}^{2} 
     (2n-2i)! \times 
     \frac{\Pochhammer{1-h+2n-2i}{2i}^2}{\Pochhammer{1-2h+2n-2i}{2i}} 
     \times \left( 
     \frac{(h+1) (2h+1-4(n-i)) (h-2(n-i))}{(2h+1-2(n-i)) (h+1-2(n-i)) (h-n+i)}
     \right) 
     } 
\end{align*} 
The first sum on the right-hand-side of 
\eqref{eqn_ShorthandFnNotation_Thk_Hhi-defs_exps_v1} 
denotes the special prime pair congruence expansions for the 
twin, cousin, and sexy prime pairs already defined by the 
examples cited in the last sections corresponding to the 
respective forms of \eqref{eqn_GenPrimePairCongFn_hndabc_v1} where 
\[\left(d, a_d, b_d, c_d\right)_{d=1}^{3} \defequals \left\{
 (1, 4, 2, 5), (2, 96, 48, 119), 
 (3, 4320, 1438, 5039) \right\}, 
\] 
and where $h \defmapsto (2n+1)(2n+1+2d)$, 
as computed in the reference \cite{SUMMARYNBREF-STUB}. 
\end{example} 

\begin{remark}
\label{footnote_AltBinomCoeffExps_in_SigmaPrimePairSums} 
\label{subsubSection_FutureResTopics_Rmks_in_SummaryNB} 
Note that the binomial coefficient identity, 
$\binom{\binom{k}{2}}{2} = 3 \binom{k+1}{4}$, 
given in the exercises section of the reference 
\cite[p.\ 535, Ex.\ 5.67]{GKP}, suggests simplifications, or 
``\emph{reductions}'' in order, of the sums, $S_n(h)$, 
when $h$ denotes some fixed, implicit application-dependent 
quadratic function of $n$ obtained by 
first expanding the inner terms, $\binom{h}{i}$, 
as a (finite) linear combination of binomial coefficient terms whose 
upper index corresponds to a linear function of $n$ 
\cite{SUMMARYNBREF-STUB}. 
The \SigmaPkg package is able to obtain alternate forms of these 
pre-processed finite sums defining the functions, 
$S_n(\beta n+\gamma)$, for scalar-valued $\beta,\gamma$ 
that generalize the last two expansions provided above in 
\eqref{eqn_nFactMod2np1_CongruenceIdent_SigmaPkgAltSums}. 
The summary notebook document prepared with this 
manuscript contains additional remarks and examples related to the 
results in the article. 
For example, several specific expansions of the 
binomial coefficients, $\binom{(2n+1)(2n+2d+1)}{i}$, at 
upper index inputs varying quadratically on $n$ suggested by the first 
upper index reduction identity above 
are computed as a starting point for simplifying the terms in these 
sums in the reference \cite{SUMMARYNBREF-STUB}. 
Additional notes providing documentation and more detailed 
computational examples will be added to 
updated versions of the summary reference. 
\end{remark} 

\subsection{Expansions of congruences involving the double factorial function} 
\label{subsubSection-example_OtherRelatedCongruences_DblFactFns} 

\subsubsection{Statements of congruences for the double factorial function} 

\begin{prop}[Congruences for the Double Factorial Function] 
Let $h \geq 2$ be odd or prime and suppose that $s$ is an integer satisfying 
$0 \leq s \leq h$. We have the following congruences for the double 
factorial function, $(2n-1)!!$: 
\begin{align*} 
(2n-1)!! & \equiv 
     \sum_{i=0}^{n} 
     \binom{h}{i} 2^{n+(s+1) i} \Pochhammer{\mathsmaller{\frac{1}{2}}-h}{i} 
     \Pochhammer{\mathsmaller{\frac{1}{2}}}{n-i} 
     && \pmod{2^{s} h} \\ 
     & \equiv 
     \sum_{i=0}^{n} 
     \binom{h}{i} \binom{2n-2i}{n-i} 
     \frac{2^{n+ (s+1) i}}{4^{n-i}} \times 
     \Pochhammer{\mathsmaller{\frac{1}{2}}-h}{i} (n-i)!
     && \pmod{2^{s} h} \\ 
     & \equiv 
     \sum_{i=0}^{n} \binom{h}{i} (-2)^{n+(s+1) i} 
     \Pochhammer{\mathsmaller{\frac{1}{2}}+n-h}{i} 
     \Pochhammer{\mathsmaller{\frac{1}{2}}-n}{n-i} 
     && \pmod{2^{s} h}. 
\end{align*} 
\end{prop}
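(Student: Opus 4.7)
The plan is to derive all three congruences as direct specializations of the third formula \eqref{eqn_pnAlphaR_seqs_finite_sum_reps_modulop-stmts_v2} stated in Proposition \ref{prop_KeyProp}. The double factorial admits two product-form representations
\[
 (2n-1)!! \;=\; \pn{n}{2}{1} \;=\; \pn{n}{-2}{2n-1},
\]
corresponding to the parameter choices $(\alpha,\beta,\gamma) = (2,0,1)$ and $(\alpha,\beta,\gamma) = (-2,2,-1)$, respectively. Both cases satisfy $|\alpha| \leq 2$, which lies within the range explicitly covered by the proof of Proposition \ref{prop_KeyProp}, so each substitution is justified without extending the underlying argument.

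First I would apply the key formula with $\alpha = 2$, $\beta = 0$, $\gamma = 1$, and $t = s$, so that $R = \beta n + \gamma = 1$ and $R/\alpha = \tfrac{1}{2}$. This produces
\[
 (2n-1)!! \equiv \sum_{i=0}^n \binom{h}{i}\, 2^{n+(s+1)i}\, \Pochhammer{1-h-\tfrac{1}{2}}{i} \Pochhammer{\tfrac{1}{2}}{n-i} \pmod{2^s h},
\]
and since $1 - h - \tfrac{1}{2} = \tfrac{1}{2} - h$, this is precisely the first stated congruence. The second congruence then follows by a purely symbolic rewrite: substituting the standard Pochhammer-to-binomial identity $\Pochhammer{\tfrac{1}{2}}{m} = \binom{2m}{m}\, m! / 4^m$ into the $\Pochhammer{\tfrac{1}{2}}{n-i}$ factor produces $\binom{2n-2i}{n-i}(n-i)!/4^{n-i}$, exactly matching the second statement.

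For the third congruence, I would use the alternative product representation and apply \eqref{eqn_pnAlphaR_seqs_finite_sum_reps_modulop-stmts_v2} with $(\alpha,\beta,\gamma,t) = (-2,2,-1,s)$, so that $R = 2n - 1$ and $R/\alpha = \tfrac{1}{2} - n$. The Pochhammer arguments simplify as $1 - h - (\tfrac{1}{2} - n) = \tfrac{1}{2} + n - h$, the power of $\alpha$ becomes $(-2)^{n+(s+1)i}$, and the modulus $h(-2)^s$ defines the same congruence classes of integers as $2^s h$, yielding the third formula verbatim.

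There is no substantive obstacle to this plan: Proposition \ref{prop_KeyProp} supplies all of the nontrivial content, and the remaining verifications amount to confirming the two product-form identities for $(2n-1)!!$ and a routine algebraic rewrite of Pochhammer symbols. The one mild subtlety worth double-checking is that in the second representation $R = 2n - 1$ genuinely depends linearly on $n$, so one must appeal to the $\beta n + \gamma$ version of the proposition rather than to the fixed-$R$ exact formula; this dependence is precisely what \eqref{eqn_pnAlphaR_seqs_finite_sum_reps_modulop-stmts_v2} is stated to handle.
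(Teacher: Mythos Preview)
Your proposal is correct and follows essentially the same route as the paper: both apply \eqref{eqn_pnAlphaR_seqs_finite_sum_reps_modulop-stmts_v2} from Proposition~\ref{prop_KeyProp} to the two product representations $(2n-1)!! = \pn{n}{2}{1}$ and $(2n-1)!! = \pn{n}{-2}{2n-1}$ to obtain the first and third congruences, and then invoke the half-index identity $\Pochhammer{\tfrac{1}{2}}{m} = \binom{2m}{m}\,m!/4^{m}$ to rewrite the first congruence as the second. Your additional remarks that $|\alpha|\le 2$ falls within the proved range of the key proposition and that the modulus $h(-2)^{s}$ coincides with $2^{s}h$ are helpful sanity checks the paper leaves implicit.
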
 
\begin{proof} 
The coefficient expansion given by the last identity in 
\eqref{eqn_pnAlphaR_seqs_finite_sum_reps_modulop-stmts_v2} 
provides the alternate forms of congruences for the 
double factorial functions, 
$(2n-1)!! = \pn{n}{-2}{2n-1}$ and $2^{n} \Pochhammer{1/2}{n} = \pn{n}{2}{1}$, 
modulo $2^{s} \cdot h$ stated in the first and third of the previous equations 
for fixed integers $h \geq 2$ and any integer $0 \leq s \leq h$. 
For natural numbers $n \geq 0$, the central binomial coefficients 
satisfy an expansion by the following identity 
given in the reference \cite[\S 5.3]{GKP}: 
\begin{align*} 
\tagtext{Binomial Coefficient Half-Index Identities} 
\Pochhammer{\mathsmaller{\frac{1}{2}}}{n} & = 
     \binom{-\mathsmaller{\frac{1}{2}}}{n} \times (-1)^{n} n! = 
     \binom{2n}{n} \times \frac{n!}{4^{n}}. 
\end{align*} 
The first identity in the proposition together with the previous 
half-index identity for the binomial coefficients imply the second 
congruence stated in the proposition. 
\end{proof} 

\subsubsection{Semi-polynomial congruences expanding the 
               central binomial coefficients} 

The next polynomial congruences satisfied by the central binomial coefficients 
modulo integer multiples of the individual 
polynomial powers, $n^p$, of $n$ for some fixed $p \geq 1$ 
also provide additional examples of some of the 
double-factorial-related phrasings of the expansions of 
\eqref{eqn_pnAlphaR_seqs_finite_sum_reps_modulop-stmts_v1} and 
\eqref{eqn_pnAlphaR_seqs_finite_sum_reps_modulop-stmts_v2} 
following from the noted identity given in 
\eqref{eqn_Vandermonde-like_PHSymb_exps_of_PhzCfs-stmt_v1} 
(see Lemma \ref{lemma_lm} and the 
computations contained in the reference \cite{SUMMARYNBREF-STUB})\footnote{ 
     \label{footnote_PolyMod_ArrowNotation} 
     In this context, the notation $\mod{f(x)} \looparrowright x \defmapsto n$ 
     denotes that the underlined expression should first be reduced as a 
     polynomial in $x$ modulo $f(x)$, and then after this operation is 
     performed that $x$ should be set to the explicit value of $n$. 
     Surprisingly, this procedure inplemented in \Mm{} using the 
     function \texttt{PolynomialMod} in the reference \cite{SUMMARYNBREF-STUB} 
     produces correct, integer-valued congruence expressions even when the 
     double-factorial-related numerator of the first equation in 
     \eqref{eqn_Wolstenholme-like_congruences_for_central_binomials} is 
     divided through by the reciprocal of $n!$, though we would typically 
     expect this not to necessarily be the case if 
     $2^n (2n-1)!!$ were first reduced modulo the right-hand-side 
     functions of $n$. 
}. 
\begin{align*} 
\tagonce\label{eqn_Wolstenholme-like_congruences_for_central_binomials} 
\binom{2n}{n} & = 
     \frac{2^{n}}{n!} \times (2n-1)!! && \\ 
     & \equiv 
     \left\lbrace 
     \undersetline{\mod{x^p} \quad \looparrowright \quad x \defmapsto n}{
     \sum_{i=0}^{n} \binom{x^p}{i} 
     2^{i} \Pochhammer{\mathsmaller{\frac{1}{2}} - x^p}{i} 
     \Pochhammer{\mathsmaller{\frac{1}{2}}}{n-i} 
     \times \frac{2^{2n}}{n!} 
     } 
     \right\rbrace 
     && \pmod{n^p} \\ 
\notag 
     & \equiv 
     \left\lbrace 
     \undersetline{\mod{x^p} \quad \looparrowright \quad x \defmapsto n}{
     \sum_{i=0}^{n} \binom{x^p}{i} \binom{2x-2i}{x-i} 
     \Pochhammer{\mathsmaller{\frac{1}{2}} - x^p}{i} \times 
     \frac{8^{i} \cdot (n-i)!}{n!} 
     } 
     \right\rbrace 
     && \pmod{n^p} 
\end{align*} 
The special cases of the congruences in 
\eqref{eqn_Wolstenholme-like_congruences_for_central_binomials} 
corresponding to $p \defequals 3$ and $p \defequals 4$, respectively, 
are related to the necessary condition for the primality of 
odd integers $n > 3$ in Wolstenholme's theorem and to the sequence of 
\emph{Wolstenholme primes} defined as 
\cite[\S 2.2]{PRIMEREC} \cite[\cf \S 7]{HARDYWRIGHTNUMT} 
(\seqnum{A088164}) 
\begin{align*} 
\tagtext{Wolstenholme Primes} 
\WolstPrimeSet & \defequals \left\{ n \geq 5: 
     \text{ $n$ prime \ and \ } 
     \mathsmaller{\binom{2n}{n} \equiv 2 \pmod{n^4}} 
     \right\} \\ 
     & \phantom{:} = 
     \left(16843, 2124679, \ldots \right). 
\end{align*} 

\subsubsection{An identity for the single factorial function 
               involving expansions of double factorial functions} 

As another example of the applications of these new integer congruence 
applications expanded through the double factorial function, 
notice that the 
following identity gives the form of another exact, finite double sum 
expansion of the single factorial function over 
convolved products of the double factorials: 
\begin{align*} 
(n-1)! & = (2n-3)!! + 
     \sum_{k=1}^{n-2} \sum_{j=k}^{n-1} 
     (-1)^{j+1} \Pochhammer{-j}{k} \Pochhammer{-(2n-k-j-2)}{j-k} 
     (2n-2j-3)!! \\ 
     & \phantom{ = (2n-3)!!} + 
     \sum_{k=1}^{n-2} \sum_{j=k+1}^{n-1} 
     (-1)^{j} \Pochhammer{-j}{k+1} \Pochhammer{-(2n-k-j-3)}{j-k-1} 
     (2n-2j-3)!! 
\end{align*} 
This identity is straightforward to prove 
starting from the first non-round sum given in 
{\S 5.1} of the reference combined with second identity for the 
component summation terms 
in {\S 6.3} of the same article \cite{DBLFACTFN-COMBIDENTS-SURVEY}. 
A modified approach involving the congruence techniques 
outlined in either the first cases cited in 
Example \ref{example_TripleSumIdents_App_to_WThm}, 
or as suggested in the previous few example cases from the 
last subsections of the article, 
then suggests even further applications adapting the 
results for new variants of the 
established, or otherwise well-known, special case 
congruence-based identities 
expanded for the notable prime number subsequences cited above in terms of the 
double factorial function 
(see Example \ref{subsubSection_example_PrimeSubsequences_ImmediateAppsOfWThm} 
in the next subsection). 

\subsection{Applications of Wilson's theorem in 
            other famous and notable special case prime subsequences} 
\label{subsubSection-Examples_SomeResults_for_Prime_k-Tuples} 
\label{subsubSection_remark_OtherApps_of_WThm_and_NewCongProps_to_PrimeSubseqs} 

The integer congruences obtained from Wilson's theorem for the 
particular special sequence cases noted in 
Section \ref{subsubSection_Examples_ConsequencesOfWThm} 
are easily generalized to give constructions over the forms other 
prime subsequences including the following special cases: 
\begin{enumerate} 
     \renewcommand{\itemsep}{-1mm} 

\item 
The \quotetext{\emph{Pierpont primes}} 
of the form $p \defequals 2^{u} 3^{v} + 1$ 
for some $u, v \in \mathbb{N}$ 
(\seqnum{A005109}); 

\item 
The subsequences of primes of the form $p \defequals n 2^{n} \pm 1$ 
(\seqnum{A002234}, \seqnum{A080075}); 

\item 
The \quotetext{\emph{Wagstaff primes}} 
corresponding to prime pairs of the form 
$\left(p, \frac{1}{3}(2^{p} + 1)\right) \in \mathbb{P}^{2}$ 
(\seqnum{A000978}, \seqnum{A123176}); and 

\item 
The generalized cases of the multifactorial prime sequences tabulated 
as in the reference \cite[Table 6, \S 2.2]{PRIMEREC} 
consisting of prime elements of the form 
$p \defequals \MultiFactorial{n}{\alpha} \pm 1$ for a fixed 
integer-valued $\alpha \geq 2$ and some $n \geq 1$. 

\end{enumerate} 

\begin{example}[The Factorial Primes and the 
                Fermat Prime Subsequences]
\label{subsubSection_example_PrimeSubsequences_ImmediateAppsOfWThm} 
The sequences of \emph{factorial primes} 
of the form $p \defequals n! \pm 1$ for some $n \geq 1$ 
satisfy congruences of the 
following form modulo $n! \pm 1$ given by the expansions of 
\eqref{eqn_Vandermonde-like_PHSymb_exps_of_PhzCfs-stmt_v1} and 
\eqref{eqn_Chn_formula_stmts} 
(\cite[\cf \S 2.2]{PRIMEREC}, \seqnum{A002981}, \seqnum{A002982}): 
\begin{align*} 
\tagtext{Factorial Prime Congruences} 
n! + 1 \text{ prime } 
     & \iff && \\ 
     & \phantom{\iff} 
     \underset{\mathlarger{(n!)! \equiv C_{n!+1,n!}(1, 1) \pmod{n!+1}}}{ 
     \underbrace{ 
     \sum_{i=0}^{n!} \binom{n!+1}{i}^{2} (-1)^{i} i! (n!-i)!} 
     } 
     & \equiv -1 && \pmod{n!+1} \\ 
n! - 1 \text{ prime } 
     & \iff 
     \underset{\mathlarger{(n!-2)! \equiv C_{n!-1,n!-2}(1, 1) \pmod{n!-1}}}{ 
     \underbrace{ 
     \sum_{i=0}^{n!-2} \binom{n!-1}{i}^{2} (-1)^{i} i! (n!-2-i)!} 
     } 
     & \equiv -1 && \pmod{n!-1}. 
\end{align*} 
The \emph{Fermat numbers}, $F_n$, 
generating the subsequence of \emph{Fermat primes} of the form 
$p \defequals 2^{m}+1$ where $m = 2^{n}$ for some $n \geq 0$ 
similarly satisfy the next congruences expanded through the 
identities for the single and double factorial functions given above 
\cite[\S 2.6]{PRIMEREC} \cite[\S 2.5]{HARDYWRIGHTNUMT} 
(\seqnum{A000215}, \seqnum{A019434}): 
\begin{align*} 
\tagtext{Fermat Prime Congruences} 
F_n \defequals 2^{2^n}+1 \text{ prime } 
     & \iff 
     2^{2^n}+1 \mid \left(2^{2^n}\right)! + 1 & && \\ 
     & \iff 
     2^{2^n}+1 \mid 2^{2^{2^n-1}} 
     \sum_{i=0}^{2^{2^n}} \binom{2^{2^n}+1}{i}^2 (-1)^{i} 
     i! \left(2^{2^n}-i\right)! +1 & && \\ 
     & \iff 
     2^{2^n}+1 \mid 2^{2^{2^n-1}} 
     \left(2^{2^{n}-1}\right)! \left(2^{2^n}-1\right)!! + 1 & && \\ 
     & \iff 
     2^{2^n}+1 \mid 2^{\frac{3}{4} \cdot 2^{2^n}} 
     \left(2^{2^{n}-2}\right)! \left(2^{2^n-1}-1\right)!! 
     \left(2^{2^n}-1\right)!! + 1 & && \\ 
     & \iff 
     2^{2^n}+1 \mid 2^{\frac{7}{8} \cdot 2^{2^n}} 
     \left(2^{2^{n}-3}\right)! 
     \left(2^{2^n-2}-1\right)!! 
     \left(2^{2^n-1}-1\right)!! 
     \left(2^{2^n}-1\right)!! + 1. & && 
\end{align*} 
For integers $h \geq 2$ and $r \geq 1$ such that $2^{r} \mid h$, the 
expansions of the congruences in the previous several equations correspond to 
forming the products of the single and double factorial functions 
modulo $h+1$ from the previous examples to require that 
\begin{align*} 
2^{\left(1-2^{-r}\right) \cdot h} \times \left(\frac{h}{2^{r}}\right)! 
     \left(\frac{h}{2^{r-1}} -1\right)!! 
     \times \cdots \times 
     \left(\frac{h}{2} -1\right)!! 
     \left(h -1\right)!! & \equiv -1 \pmod{h+1}, 
\end{align*} 
though more general expansions by products of the $\alpha$-factorial 
functions, $\MultiFactorial{\alpha}{n}$, for $\alpha > 2$ are apparent 
\cite[\cf \S 6.4]{MULTIFACT-CFRACS}. 
The \emph{generalized Fermat numbers}, 
$F_n(\alpha) \defequals \alpha^{2^n}+1$, and the corresponding 
\emph{generalized Fermat prime} subsequences 
when $\alpha \defequals 2, 4, 6$ suggest generalizations of the 
approach to the results in the previous equations through the 
procedure to the multiple, $\alpha$-factorial function expansions suggested in 
Section \ref{subsubSection_MoreGeneralExps_congruences_multiple_factfns} 
that generalizes the procedure to expanding the congruences above for the 
Fermat primes when $\alpha \defequals 2$. 
\end{example} 

The next concluding examples provide an approach to generalized congruences 
providing necessary and sufficient conditions on the primality of 
integers in special prime subsequences involving mixed expansions of the 
single and double factorial functions. 

\begin{example}[Mersenne Primes] 
\label{example_PrimeSubsequences_ImmediateAppsOfWThm_v2} 
The \emph{Mersenne primes} correspond to prime pairs of the 
form $(p, M_p)$ for $p$ prime and where 
$M_n \defequals 2^{n}-1$ is a \emph{Mersenne number} for some 
(prime) integer $n \geq 2$ 
(\cite[\S 2.7]{PRIMEREC}, \cite[\S 2.5; \S 6.15]{HARDYWRIGHTNUMT}, 
\cite[\cf \S 4.3, \S 4.8]{GKP}, 
\seqnum{A001348}, \seqnum{A000668}, \seqnum{A000043}). 
The requirements in Wilson's theorem for the primality of both 
$p$ and $M_p$ provide elementary proofs of the following equivalent 
necessary and sufficient conditions for the primality of the 
prime pairs of these forms: 
\begin{align*} 
\tagtext{Mersenne Prime Congruences} 
\left(p, 2^{p}-1\right) \in \mathbb{P}^{2} 
     & \iff && \\ 
     & \phantom{\iff\ } 
     p(2^p-1) \mid (p-1)! (2^p-2)! + (p-1)! + (2^p-2)! + 1 && \\ 
     & \iff 
     p(2^p-1) \mid \bigl( 
     C_{p(2^p-1),p-1}(-1, p-1) C_{p(2^p-1),2^p-2}(-1, 2^p-2) && \\ 
     & \phantom{\iff p(2^p-1) \mid \bigl( \quad } + 
     C_{p(2^p-1),p-1}(-1, p-1) + C_{p(2^p-1),2^p-2}(-1, 2^p-2) + 1 
     \bigr) && \\ 
     & \iff 
     p(2^p-1) \mid \bigl( 
     C_{p(2^p-1),p-1}(1, 1) C_{p(2^p-1),2^p-2}(1, 1) && \\ 
     & \phantom{\iff p(2^p-1) \mid \bigl( \quad } + 
     C_{p(2^p-1),p-1}(1, 1) + C_{p(2^p-1),2^p-2}(1, 1) + 1 
     \bigr) && \\ 
     & \iff 
     p(2^p-1) \mid 2^{2^{p-1}-1} (p-1)! (2^{p-1}-1)! (2^{p}-3)!! + 
     (p-1)! + (2^p-2)! + 1. && 
\end{align*} 
The congruences on the right-hand-sides of the previous equations 
are then expanded by the results in 
\eqref{eqn_Vandermonde-like_PHSymb_exps_of_PhzCfs-stmt_v1} and 
\eqref{eqn_Chn_formula_stmts}, and 
through the second cases of the more general 
product function congruences stated in 
\eqref{eqn_pnAlphaR_seqs_finite_sum_reps_modulop-stmts_v1}. 
\end{example} 

\begin{example}[Sophie Germain Primes] 
Wilson's theorem similarly implies the next related 
congruence-based characterizations of the 
\emph{Sophie Germain primes} corresponding to the 
prime pairs of the form $(p, 2p+1)$ where $p, p-1, 2p < p(2p+1)$ 
(\cite[\S 5.2]{PRIMEREC}, \seqnum{A005384}). 
\begin{align*} 
\tagtext{Sophie Germain Prime Congruences} 
\left(p, 2p+1\right) \in \mathbb{P}^{2} & & \\      
     & \iff 
     (p-1)! (2p)! + (p-1)! + (2p)! & \equiv -1 & \pmod{p(2p+1)} \\ 
     & \iff 
     2^p p! (p-1)! (2p-1)!! + (p-1)! + (2p)! & \equiv -1 
     & \pmod{p(2p+1)} 
\end{align*} 
The expansions of the generalized forms of the Sophie Germain primes 
noted in the reference \cite[\S 5.2]{PRIMEREC} 
also provide applications of the 
multiple, $\alpha$-factorial function identities outlined in 
Example \ref{subsubSection_example_PrimeSubsequences_ImmediateAppsOfWThm} and 
suggested in 
Section \ref{subsubSection_MoreGeneralExps_congruences_multiple_factfns} 
of the article 
which result from expansions of the arithmetic progressions 
of the single factorial functions given in the examples from the 
reference \cite[\S 6.4]{MULTIFACT-CFRACS}. 
\end{example} 

\begin{remark}[Congruences for Integer Powers and Sequences of Binomials] 
Notice that most of the factorial function expansions involved in the 
results formulated by the previous few examples do not 
immediately imply corresponding congruences obtained from 
\eqref{eqn_pnAlphaR_seqs_finite_sum_reps_modulop} 
satisfied by the \emph{Wieferich prime} sequence defined by 
\cite[\S 5.3]{PRIMEREC} (\seqnum{A001220}) 
\begin{align*} 
\tagtext{Wieferich Primes} 
\WieferichPrimeSet & \defequals 
     \left\{ n \geq 2: \text{ $n$ prime \ and \ } 
     2^{n-1} \equiv 1 \pmod{n^2} 
     \right\} \\ 
     & \phantom{\defequals} \quad 
     \seqmapsto{A001220} \left(1093, 3511, \ldots \right), 
\end{align*} 
nor results for the variations of the sequences of 
binomials enumerated by the rational convergent-function-based 
generating function identities over the 
binomial coefficient sums constructed in the reference 
\cite[\S 6.7]{MULTIFACT-CFRACS} 
modulo prime powers $p^m$ for $m \geq 2$. 
However, indirect expansions of the sequences of binomials, 
$2^{n-1}$ and $2^{n-1}-1$, by the 
Stirling numbers of the second kind through the lemma provided in the reference 
\cite[Lemma 12]{MULTIFACT-CFRACS}, yield the following 
divisibility requirements characterizing the sequence of 
Wieferich primes where $\Pochhammer{2}{n} = (n+1)!$: 
\begin{align*} 
2^{n-1} \phantom{-1} 
     & = 
     \sum_{k=0}^{n-1} \gkpSII{n-1}{k} (-1)^{n-1-k} \Pochhammer{2}{k} && \\ 
     & \equiv 
     \sum_{k=0}^{n-1} \sum_{i=0}^{k+1} 
     \gkpSII{n-1}{k} \binom{n^2}{i}^{2} (-1)^{n-1-k-i} i! (k+1-i)! && 
     \pmod{n^2} \\ 
2^{n-1}-1 
     & = 
     2^{n-2} + 2^{n-3} + \cdots + 2 + 1 && \\ 
     & = 
     \sum_{\substack{ 0 \leq j \leq i \leq n-2}} 
     \gkpSII{i}{j} (-1)^{i-j} \Pochhammer{2}{j} && \\ 
     & \equiv 
     \sum_{\substack{ 0 \leq m \leq j \leq i \leq n-2}} 
     \gkpSII{i}{j} \binom{n^2}{m} (-1)^{i-j+m} 
     \FFactII{(n^2+1)}{m} \Pochhammer{2}{j-m} && 
     \pmod{n^2}. 
\end{align*} 
The constructions of the corresponding congruences 
for the sequences of binomials, $a^{n-1} - 1 \pmod{n^2}$, are 
obtained by a similar procedure \cite[\cf \S 5.3; Table 45]{PRIMEREC}. 
Additional expansions of related congruences for the terms 
$3^{t} + 1 \pmod{2t+1}$ for prime $2t+1 \in \mathbb{P}$ 
in the particular forms of 
\begin{align*} 
3^{t} + 1 
     & \equiv 
     \phantom{4 \times} 
     \sum_{j=0}^{t} \sum_{i=0}^{j} 
     \gkpSII{t}{j} \binom{2t+1}{m} (-1)^{t-j+m} 
     \FFactII{(2t+3)}{m} \Pochhammer{3}{j-m} + 1 && 
     \pmod{2t+1} \\ 
3^{t} + 1     
     & \equiv 
     4 \times \sum_{m=0}^{t-1} \sum_{j=0}^{m} \sum_{i=0}^{j} 
     \gkpSII{m}{j} \binom{2t+1}{i} (-1)^{t-1-j+i} 
     \FFactII{(2t+3)}{i} \Pochhammer{3}{j-i} && 
     \pmod{2t+1}, 
\end{align*} 
where $\Pochhammer{3}{j} = \frac{1}{2} (i+2)!$, 
lead to double and triple sums providing the necessary and sufficient 
condition for the primality of the Fermat primes, $F_k$, 
from the reference \cite[\S 6.14]{HARDYWRIGHTNUMT} 
when $t \defmapsto 2^{2^{k}-1}$ for some integer $k \geq 1$. 
\end{remark} 

\section{Conclusions} 
\label{Section_ConcludingRemarks} 

\subsection{Summary} 

In Section \ref{Section_KeyProp_Proof}, we proved the key results stated in 
Proposition \ref{prop_KeyProp} for the 
special cases where $\alpha := \pm 1, 2$, which includes the congruences 
involving the single and double factorial functions cited in the other 
applications from Section \ref{subSection_FiniteDiffEqns_for_the_GenFactFns}. 
We note that our numerical evidence provided in the summary notebook 
reference \cite{SUMMARYNBREF-STUB} suggests that these congruences do 
in fact also hold more generally for all integers $\alpha \neq 0$ and 
all $h \geq 2$ (not just the odd and prime cases of these integer moduli). 
The applications of the key proposition given in the article provide 
a number of propositions which follow as corollaries of the first set of 
results. The specific examples of the new results we prove within the 
article include new finite sum expansions and congruences for the 
$\alpha$-factorial functions, as well as applications of our results to 
formulating new statements of necessary and sufficient conditions on the 
primality of integer subsequences, pairs, and triples. 

In many respects, this article is a follow-up to the first article 
published in $2017$ \cite{MULTIFACT-CFRACS}. 
For comparision with the results given in the reference, we note that the 
results in Proposition \ref{prop_KeyProp} and 
Corollary \ref{cor_CongForThe_AlphaFactFns} 
do not provide a simple or otherwise apparent mechanism for formulating 
new congruences and recurrence relations satisfied by the 
triangles of $\alpha$-factorial coefficients, $\FcfII{\alpha}{n}{k}$. 
However, such congruences and recurrence relations for the 
corresponding coefficients, $[R^k] p_n(\alpha, R)$, of the 
generalized product sequences defined in 
\eqref{eqn_GenFact_product_form} 
are in contrast easy to obtain from the results in the 
key proposition. We conclude the article by posing 
several remaining open questions related to the 
expansions of the generalized factorial functions considered in 
this article and in the references \cite{MULTIFACT-CFRACS,MULTIFACTJIS}. 

\subsection{Open questions and topics for future research} 

\subsubsection{Open questions} 

We pose the following open questions as topics for future research on the 
generalized factorial functions studied in this article and in the 
references \cite{MULTIFACT-CFRACS,MULTIFACTJIS}: 
\begin{enumerate} 

\item Can we determine bounds on the zeros of the convergent denominator 
      functions, $\FQ_h(\alpha, \alpha-d; z)$ and 
      $\FQ_h(-\alpha, \alpha n-d; z)$, in order to determine more accurate 
      Stirling-like approximations for the generalized multiple factorial 
      function cases of $\AlphaFactorial{\alpha n-d}{\alpha}$? 

\item What is the best way to evaluate the $\alpha$-factorial functions, 
      $\MultiFactorial{n}{\alpha}$, for fractional $\alpha$ or non-negative 
      rational $n$? For example, are the finite sum representations in 
      Proposition \ref{prop_KeyProp} a good start at 
      generalizing these functions to non-integer arguments and the 
      strictly rational-valued parameters that occur in applications 
      intentionally not discussed in these articles? 

\item What is the best way to translate the new congruence results for the 
      single, double, and $\alpha$-factorial functions to form integer 
      congruences for the binomial coefficients, 
      $\binom{x}{k} = \frac{\FFactII{x}{k}}{k!}$? 

\end{enumerate} 

\subsubsection{Applications to generalizations of known finite sum 
               identities involving the double factorial function} 
\label{subsubSection_FutureResTopics_GenDblFactFnSumIdents_FiniteSums} 

The construction of further 
analogues for generalized variants of the finite summations and 
more well-known combinatorial identities satisfied by the 
double factorial function cases when $\alpha \defequals 2$ from the 
references is suggested as a topic for future investigation. 
The identities for the more general $\alpha$-factorial function cases 
stated in 
Example \ref{example_GenDblFactFnSumIdents_FiniteSumsInvolving_AlphaFactFns} of 
Section \ref{ssS_example_GenDblFactFnSumIdents_FiniteSumsInvolving_AlphaFactFns} 
suggest one possible approach to generalizing the 
known identities summarized in the references 
\cite{MAA-FUN-WITH-DBLFACT,DBLFACTFN-COMBIDENTS-SURVEY} for the 
next few particularly interesting special cases corresponding to the 
triple and quadruple factorial function cases, $n!!!$ and $n!!!!$, 
respectively.

\bigskip
\hrule
\bigskip

\noindent \textit{2010 MSC}: 
Primary 05A10; Secondary 11Y55, 11Y65, 11A07, 11B50.  

\smallskip 
\noindent\textit{Keywords}: 
continued fraction, J-fraction, 
Pochhammer symbol, factorial function, 
multifactorial, multiple factorial, 
single factorial, double factorial, triple factorial, 
Pochhammer k-symbol, 
factorial congruence, prime congruence, Wilson's theorem, 
Clement's theorem, sexy prime. 

\bigskip
\hrule
\bigskip 

\noindent 
(Concerned with sequences
\seqnum{A000043}, \seqnum{A000215}, \seqnum{A000668}, \seqnum{A000978}, 
\seqnum{A000984}, \seqnum{A001008}, \seqnum{A001097}, \seqnum{A001220}, 
\seqnum{A001348}, \seqnum{A001359}, \seqnum{A002144}, \seqnum{A002234}, 
\seqnum{A002496}, \seqnum{A002805}, \seqnum{A002981}, \seqnum{A002982}, 
\seqnum{A005109}, \seqnum{A005384}, \seqnum{A007406}, \seqnum{A007407}, 
\seqnum{A007408}, \seqnum{A007409}, \seqnum{A007540}, \seqnum{A007619}, 
\seqnum{A019434}, \seqnum{A022004}, \seqnum{A022005}, \seqnum{A023200}, 
\seqnum{A023201}, \seqnum{A023202}, \seqnum{A023203}, \seqnum{A046118}, 
\seqnum{A046124}, \seqnum{A046133}, \seqnum{A080075}, \seqnum{A088164}, 
\seqnum{A123176}. 
) 

\bigskip
\hrule
\bigskip 

\end{document}